\newtheorem{theorem}{Theorem}[section]
\newtheorem{lemma}[theorem]{Lemma}
\theoremstyle{definition}
\newtheorem{definition}[theorem]{Definition}
\newtheorem{example}[theorem]{Example}
\numberwithin{equation}{section}
\numberwithin{figure}{section}
\def \isnatural {\in\mathbb{N}}
\def\R{\mathbb{R}}
\def\N{\mathbb{N}}
\def\Z{\mathbb{Z}}
\def\Q{\mathbb{Q}}
\def\good{F_{\sigma\delta}}
\def\No{\mathbb{N}_0}
\renewcommand{\geq}{\geqslant}
\renewcommand{\leq}{\leqslant}
\newcommand{\tef}{transcendental entire function}
\newcommand\qfor{\text{for }}
\DeclarePairedDelimiter{\floor}{\langle}{\rangle}
\def\blfootnote{\xdef\@thefnmark{}\@footnotetext}
\begin{document}

\title[Escaping sets of continuous functions]{Escaping sets of continuous functions}
\author{Ian Short, \, David J. Sixsmith}
\address{Department of Mathematics and Statistics \\ The Open University \\   Milton Keynes MK7 6AA\\   UK}
\email{Ian.Short@open.ac.uk}
\address{School of Mathematical Sciences \\ University of Nottingham \\ Nottingham
NG7 2RD \\ UK}
\email{David.Sixsmith@nottingham.ac.uk}

\subjclass[2000]{Primary: 37F10; Secondary: 30C65, 30D05}
\thanks{The second author was supported by Engineering and Physical Sciences Research Council grant EP/L019841/1.}
%
%
%
%
\begin{abstract}
Our objective is to determine which subsets of $\R^d$ arise as escaping sets of continuous functions from $\R^d$ to itself. We obtain partial answers to this problem, particularly in one dimension, and in the case of open sets. We give a number of examples to show that the situation in one dimension is quite different from the situation in higher dimensions. Our results demonstrate that this problem is both interesting and perhaps surprisingly complicated.
\end{abstract}
\maketitle
%
%
%
%

\section{Introduction}
\subsection{Background}
Given a function $f : \R^d \to \R^d$, we denote the $k$th iterate of $f$ by $f^k$, for any positive integer $k$, and we define the \emph{escaping set} $I(f)$ of $f$ to be
\[
I(f) = \{ x \in \R^d : f^k(x) \rightarrow\infty \text{ as } k\rightarrow\infty \}.
\]
The escaping set of an entire function is a familiar object in complex dynamics. Such sets were first studied in generality by Eremenko \cite{MR1102727}, who showed that if $f$ is a transcendental entire function, then $I(f)$ is never empty and  the closure of $I(f)$ has no bounded components. Eremenko conjectured that, in fact, $I(f)$ has no bounded components; this conjecture remains open and has stimulated much further investigation.

In \cite{MR2448586}, escaping sets of quasiregular functions were studied; we refer to, for example, \cite{MR950174} for a definition of a quasiregular function. It was shown in \cite{MR2448586} that if $f : \R^d \to \R^d$ is a quasiregular function, then, providing $f$ satisfies a certain growth condition, the escaping set $I(f)$ necessarily contains an unbounded component. Also, an example was given of a quasiregular function with an essential singularity at infinity for which the closure of the escaping set has a bounded component. This shows that Eremenko's conjecture fails for quasiregular functions.

The escaping set can be defined for \emph{any} function from $\R^d$ to itself -- the function need not be complex differentiable or quasiregular -- so it is reasonable to study escaping sets in a more general context. In this paper, we investigate the structure of escaping sets of \emph{continuous} functions from $\R^d$ to itself (our functions need not even be surjective). More explicitly, we address the following question: \emph{given a subset $S$ of $\R^d$, does there exist a continuous function $f : \R^d \to \R^d$ such that $S = I(f)$?} We give examples to show that this question, in its full generality, is difficult. We make headway with some special cases, particularly in one dimension, and highlight some interesting differences between the theory of escaping sets of continuous functions and the theory of escaping sets of transcendental entire functions. We are not aware of any other study of escaping sets in this generality. 

To facilitate our discussion, we make the following definition.

\begin{definition}
Suppose that $S$ is a subset of $\R^d$ . Then we say that $S$ is an \emph{$I_d$-set} if there is a continuous function $f : \R^d \to \R^d$ such that $S = I(f)$.
\end{definition}

Notice that the collection of $I_d$-sets is invariant under homeomorphisms, in the following sense. Let us denote the set $\R^d\cup\{\infty\}$ by $\overline{\R^d}$, and endow it with the usual topology of a one-point compactification. Suppose that $S$ and $S'$ are subsets of $\R^d$, and there is a homeomorphism $\phi : \overline{\R^d} \to \overline{\R^d}$ such that $\phi(S) = S'$ and $\phi(\infty) = \infty$. Then $S$ is an $I_d$-set if and only if $S'$ is an $I_d$-set. This follows from the fact that if $S = I(f)$, then $S' = I(\phi \circ f \circ \phi^{-1})$.

If $f$ is continuous, then $I(f)$ is either empty or unbounded. Moreover, we can write
\[
I(f) = \bigcap_{p=1}^\infty \bigcup_{q=1}^\infty \bigcap_{m \geq q} \{ x : |f^m(x)| \geq p \}.
\]
 It follows that a necessary condition for a non-empty set $S$ to be an $I_d$-set is that $S$ is an unbounded $\good$ set. However, there are many examples in this paper of unbounded $\good$ sets that are not $I_d$-sets. In order to obtain sufficient conditions for sets to be $I_d$-sets, we must impose further restrictions on them; first we look at open sets, and then sets in one dimension. The reader may care to peruse Figure~\ref{fig:sets} and speculate as to which of the six open planar sets, shown in black, are $I_2$-sets; all will be revealed later.

\begin{figure}[ht]
\subfloat[]{\includegraphics[width=.28\textwidth]{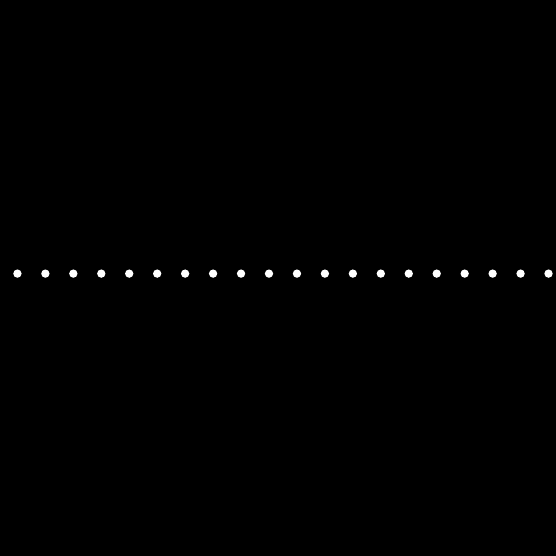}}\quad
\subfloat[]{\includegraphics[width=.28\textwidth]{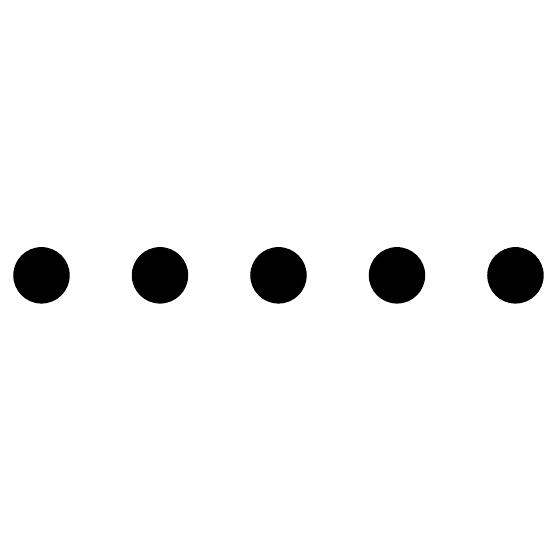}}\quad
\subfloat[]{\includegraphics[width=.28\textwidth]{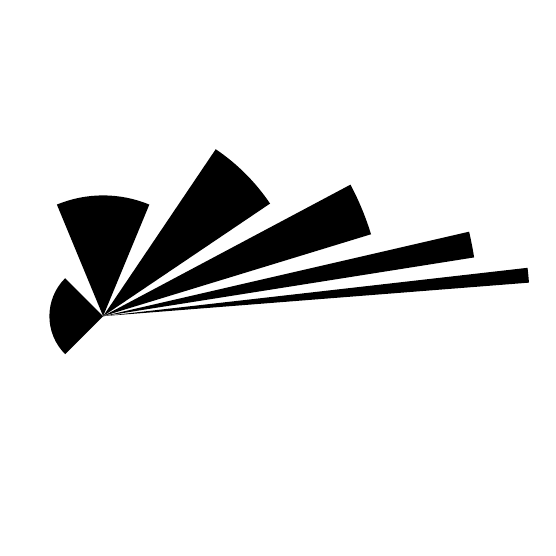}}\hfill
\subfloat[]{\includegraphics[width=.28\textwidth]{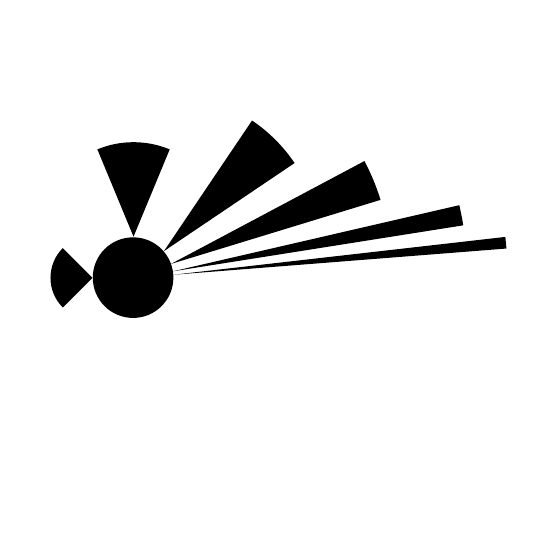}}\quad
\subfloat[]{\includegraphics[width=.28\textwidth]{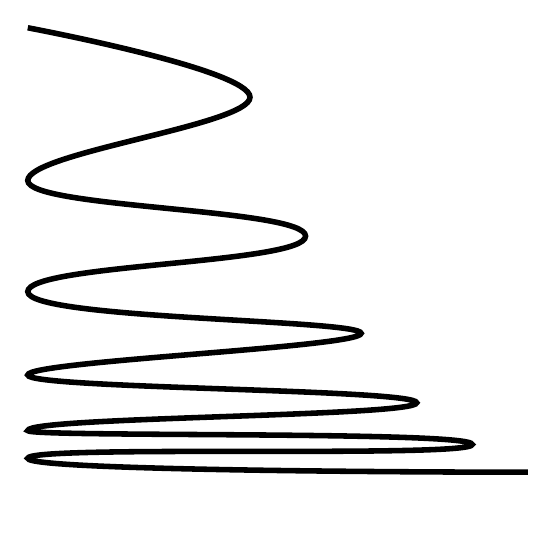}}\quad
\subfloat[]{\includegraphics[width=.28\textwidth]{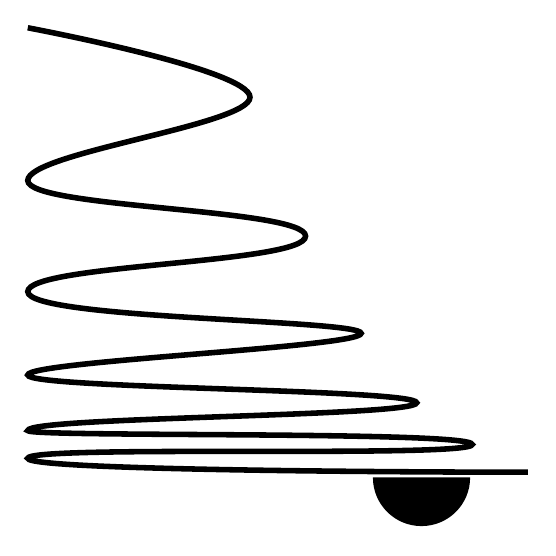}}\hfill
\caption{(a) $\mathbb{R}^2\setminus (\mathbb{Z}\times\{0\})$; (b) $\bigcup_{n\in\mathbb{Z}} D(n,\tfrac14)$; (c) $\bigcup_{n=1}^\infty S(0,n,\pi/2^{n-1})$;\\ (d) $D(0,1)\cup \bigcup_{n=1}^\infty S(1,n,\pi/2^{n-1})$; (e) an infinite snake-like region that accumulates on $[0,+\infty)$; (f) the same snake-like region together with the set $D(5,1)\cap \{(x,y)\,:\,y<0\}$}
\label{fig:sets}
\end{figure}

In the caption of Figure~\ref{fig:sets}, we denote the open disc with radius $r$ and centre $c$ by $D(c,r)$. Also, for positive numbers $r$, $s$ and $\phi$, we write
\[
S(r,s,\phi) = \left\{(r\cos\phi+t\cos\theta,r\sin\phi+t\sin\theta)\,:\, |\theta-\phi|<\phi/4, \ 0<t<s\right\},
\]
which is an open sector of the disc $D((r\cos\phi,r\sin\phi),s)$ of angle $\phi/2$. We remark that none of the open sets displayed in the figure are the escaping sets of transcendental entire functions, as the escaping sets of such functions are never open. This follows from the observation that if, on the contrary, $f$ is a transcendental entire function and $I(f)$ is open, then the \emph{Julia set} of $f$, which is equal to $\partial I(f)$, does not meet $I(f)$, which contradicts a result of \cite{MR1102727}; we refer to \cite{MR1216719} for further background and definitions.

It is useful to observe now that, if $f:\R^d\to\R^d$, then $I(f)$ is \emph{completely invariant}, in the sense that $f(I(f))\subset I(f)$ and $f^{-1}(I(f)) \subset I(f)$.

\subsection{Open sets}

The results in this subsection are about classifying \emph{open} $I_d$-sets. We follow the convention here (and later on) that if $d$ is not specified, then it can take any positive integer value. We define an \emph{unbounded curve} to be a continuous injective function $\gamma:[0,+\infty)\to \R^d$ such that the image set $\gamma([0,+\infty))$ is unbounded. We say that infinity is \emph{accessible} from an unbounded open subset $U$ of $\R^d$ if there is an unbounded curve $\gamma:[0,+\infty)\to U$ that does not  accumulate at any point of $\R^d$, in the sense that $\gamma(t)\to \infty$ as $t\to\infty$. Such a curve is a homeomorphism onto its image.

\begin{theorem}\label{theo:hasanunboundedcomponent}
Any open, unbounded subset of $\R^d$ from which infinity is accessible is an $I_d$-set.
\end{theorem}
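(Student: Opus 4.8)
The plan is to reduce the statement to constructing a single continuous map with prescribed behaviour on $U$ and on its complement, and then to build that map by funnelling orbits into the accessible curve. Write $C = \R^d\setminus U$, a closed set. I would look for a continuous $f:\R^d\to\R^d$ with the three properties that $f$ restricts to the identity on $C$, that $f(U)\subseteq U$, and that $f^k(x)\to\infty$ for every $x\in U$. Granting such an $f$, the identity on $C$ gives $I(f)\cap C=\emptyset$, so $I(f)\subseteq U$, while the escape property gives $U\subseteq I(f)$; hence $I(f)=U$ exactly, as required. Thus the whole problem becomes the construction of $f$ with these three properties, and the interest lies entirely in arranging escape.

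For the construction I would exploit accessibility to manufacture an explicit channel to infinity. Since $\gamma([0,\infty))\subseteq U$ is a properly embedded ray with $\gamma(t)\to\infty$, and $U$ is open, I can fatten it to an open tube $\mathcal{T}=\bigcup_{t\geq 0}B(\gamma(t),\tfrac12\,\mathrm{dist}(\gamma(t),C))\subseteq U$, which is unbounded and runs out to infinity inside $U$. I would then define $f$ as the time-one map of a continuous flow whose generating displacement vanishes precisely on $C$; this simultaneously forces $f=\mathrm{id}$ on $C$ and, by continuity of $\mathrm{dist}(\cdot,C)$, guarantees that $f$ is continuous across $\partial U$. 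On the tube the displacement should shift points outward along $\gamma$ (morally $\gamma(t)\mapsto\gamma(t+1)$), so that points of $\mathcal{T}$ march off to infinity, while away from the tube it should carry the remaining points of $U$ towards $\mathcal{T}$. One subtlety to dispose of along the way is that $U$ need not be connected: components not meeting $\gamma$ must still be driven out, which I would handle by directing their motion into the component containing $\mathcal{T}$ before the outward shift takes over.

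The hard part, and the reason accessibility is the right hypothesis, is proving that \emph{every} orbit in $U$ genuinely tends to infinity rather than stalling. Because the displacement must vanish on $C$, it is automatically small near $\partial U$, and a careless choice allows orbits to converge to a finite boundary point instead of escaping; the set in Figure~\ref{fig:sets}(a) already shows that a naive radial push gets trapped at boundary equilibria. The remedy is to orient the motion near $\partial U$ so that it runs \emph{along} the boundary in the direction of the accessible end, never \emph{into} it, so that even where the steps shrink their cumulative effect still carries the orbit out along the channel $\mathcal{T}$ to infinity. Making this quantitative---choosing the displacement field so that a suitable proper quantity increases without bound along every orbit and no orbit accumulates on $C$---is the crux of the argument; once it is in place, the verification that $I(f)=U$ is immediate from the reduction above.
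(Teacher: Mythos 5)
Your reduction is where the proposal breaks, and it breaks irreparably: you require $f$ to restrict to the identity on $C=\R^d\setminus U$ and every orbit in $U$ to escape (note that $f(U)\subseteq U$ is then automatic, since a point of $U$ mapped into $C$ is frozen forever). These demands are incompatible in general, because the theorem allows $U$ to be disconnected. Take $U = D(0,1)\cup\{x\in\R^2 : |x|>2\}$, which is open, unbounded, and has infinity accessible, so it is covered by the theorem. If $f=\mathrm{id}$ on the closed annulus $C=\{x : 1\leq|x|\leq 2\}$, then $f(D(0,1))$ is connected and contained in $U$, hence lies in a single component of $U$; and for $x_n\in D(0,1)$ with $x_n\to p$, $|p|=1$, continuity gives $f(x_n)\to f(p)=p$, which rules out the unbounded component. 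So $f(D(0,1))\subseteq D(0,1)$, the orbit of every point of the disc is bounded, and $I(f)\neq U$. This is exactly the manoeuvre you proposed for components not meeting $\gamma$ --- ``directing their motion into the component containing $\mathcal{T}$'' --- and it is precisely what continuity plus identity-on-$C$ forbids. Even for connected $U$ the proposal is incomplete: the step you yourself identify as the crux (a displacement field running ``along'' $\partial U$ toward the accessible end, with a proper quantity increasing along every orbit) is asserted rather than constructed, and since $\partial U$ is an arbitrary closed set there is no usable notion of a direction along the boundary to build it from.

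The paper's proof avoids both difficulties by not fixing the complement pointwise. It picks a boundary point $x_0\in\partial U$ reachable by a segment from a point $x_1$ of $\gamma$, forms the curve $\Gamma$ (segment from $x_0$ to $x_1$, then $\gamma$), and sends the \emph{entire} complement of the set to the single fixed point $x_0=\Gamma(0)$. A Tietze-type extension (Lemma~\ref{lemma:open}) supplies a continuous $\psi\geq 0$ with $\psi^{-1}(0)=\R^d\setminus U$ and $\psi=\Gamma^{-1}$ on $|\Gamma|$, and one defines $f=\Gamma\circ h\circ\psi$ on $U$ with $h(t)=2t$, together with $f(x)=\Gamma(\operatorname{dist}(x,\partial V))$ on each component $V$ other than the one containing $\gamma$. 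Then every point of the set lands on $|\Gamma|\setminus\{x_0\}$ after one iterate, where $f$ is conjugate to the doubling map, so escape need only be verified on the explicit curve; continuity across $\partial U$ holds because $\psi\to 0$ there, matching the constant value $x_0$. Your stalling problem near the boundary never arises, and disconnected $U$ is handled uniformly. If you want to rescue your flow-based picture, the first change must be to give up $f|_{C}=\mathrm{id}$.
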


We remark that Baker \cite[Theorem 2]{MR980793} showed that infinity is always accessible in a so-called \emph{Baker domain} of a {\tef}; a Baker domain is an open set in which the iterates tend to infinity locally uniformly, and so always lies in the escaping set. Later on we will see two examples, Examples~\ref{example:snake} and \ref{example:failsnake}, of unbounded subsets of $\R^2$ from which infinity is \emph{not} accessible. In the first example, the unbounded set \emph{is} an $I_2$-set, and in the second, the unbounded set is \emph{not} an $I_2$-set. The set in the second example, however, has two components; it is not a domain. The question of whether every unbounded domain in $\R^2$ is an $I_2$-set remains open.

Next we consider open sets without unbounded components. There certainly are open sets without unbounded components that are $I_2$-sets: Example~\ref{example.open} (to come) provides an example of one. The following theorem gives necessary conditions for an open set without unbounded components to be an $I_d$-set.

\begin{theorem}\label{theo:hasnounboundedcomponent}
Suppose that $S$ is a non-empty open subset of $\R^d$ that has no unbounded components. If $S$ is an $I_d$-set, then there exist a sequence $(V_k)_{k\isnatural}$ of components of $S$ and $r>0$, such that
\begin{enumerate}[label=\emph{(\roman*)}]
\item $\sup\{|x|\,:\, x \in V_k\} \rightarrow +\infty$ as $k\rightarrow\infty$;\label{t21}
\item $\inf\{|x|\,:\,x \in V_k\} \leq r$, for $k\isnatural$.\label{t22}
\end{enumerate}
\end{theorem}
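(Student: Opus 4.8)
My plan is to argue by contradiction, exploiting the complete invariance of $I(f)$ together with the observation that a point on the boundary of a component of $S$ cannot escape. Write $S=I(f)$ for a continuous $f$. Since $S$ is a non-empty $I_d$-set it is unbounded, and since it has no unbounded components every component $V$ of $S$ is bounded, so the quantities $\inf\{|x|:x\in V\}$ and $\sup\{|x|:x\in V\}$ are finite; I will refer to these as the inner and outer extent of $V$. Note that condition \ref{t21} alone is immediate from unboundedness (any sequence of points of $S$ with moduli tending to infinity lies in components of unbounded outer extent); the real content is to produce a \emph{single} $r$ for which these far-reaching components also return to within distance $r$ of the origin, which is condition \ref{t22}.

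Suppose the conclusion fails. Then for every $r>0$ the components with inner extent at most $r$ have uniformly bounded outer extent; that is, there is $\rho(r)<\infty$ with $\sup\{|x|:x\in V\}\le\rho(r)$ whenever $\inf\{|x|:x\in V\}\le r$. The useful reformulation of this negation is: for any sequence of components whose outer extent tends to infinity, the inner extent also tends to infinity. I would record this implication at the outset, since it is what turns a far-reaching orbit into a trapping mechanism.

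Now fix $x_0\in S$ and let $V_k$ be the component of $S$ containing $f^k(x_0)$. First I would verify that $f(V_k)\subset V_{k+1}$: the set $f(V_k)$ is connected as the continuous image of a connected set, it lies in $S$ by complete invariance, and it contains $f^{k+1}(x_0)$, so it is contained in the single component $V_{k+1}$. Passing to closures, continuity gives $f(\overline{V_k})\subset\overline{f(V_k)}\subset\overline{V_{k+1}}$, and hence by induction $f^j(\overline{V_k})\subset\overline{V_{k+j}}$ for all $j\geq 0$. Since $\sup\{|x|:x\in V_k\}\ge|f^k(x_0)|\to\infty$, the reformulated negation forces $\inf\{|x|:x\in V_{k}\}\to\infty$ as $k\to\infty$.

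The contradiction then comes from the boundary. Each $V_k$ is a non-empty bounded open proper subset of the connected space $\R^d$, so $\partial V_k\neq\emptyset$; pick any $z\in\partial V_k$. Because $V_k$ is a component of the open set $S$, no boundary point of $V_k$ lies in $S$ (such a point would lie in a component meeting $V_k$, forcing equality), so $z\notin I(f)$ and the orbit of $z$ does not tend to infinity. On the other hand $z\in\overline{V_k}$, whence $f^j(z)\in\overline{V_{k+j}}$ and therefore $|f^j(z)|\ge\inf\{|x|:x\in V_{k+j}\}\to\infty$ as $j\to\infty$, giving $z\in I(f)$ — a contradiction. The step I expect to be the main obstacle is precisely this last identification: recognizing that the right quantity to track is the inner extent, and that a boundary point of an orbit-component is trapped inside the closures of all subsequent components and so is forced to inherit their escape. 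The remaining technical points (the inclusion $f(\overline{V_k})\subset\overline{V_{k+1}}$ and the deduction that the inner extent tends to infinity from the negation) are routine once this mechanism is in place.
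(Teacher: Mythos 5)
Your proof is correct and rests on essentially the same mechanism as the paper's: the paper also follows the orbit of components $V_k$ containing $f^k(x_0)$, notes $f(\partial V_k)\subset \partial V_{k+1}$, and uses a boundary point $z\in\partial V_0$ with $z\notin I(f)$ --- but it argues directly, extracting a bounded subsequence $|f^{k_i}(z)|\leq r$ and taking the components $V_{k_i}$ as the required sequence, whereas you run the same engine in the contrapositive, showing the trapped boundary point would otherwise be forced to escape. The difference is purely one of logical organization (the paper's direct version actually exhibits the sequence witnessing (i) and (ii)), so no substantive gap.
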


We show later (see Example~\ref{example.nastyone}) that the conditions of Theorem~\ref{theo:hasnounboundedcomponent}  are not sufficient  for an open set with no unbounded components to be an $I_d$-set.

\subsection{One dimension} \label{d1case}

Let us now consider escaping sets in one dimension. We can immediately classify the open $I_1$-sets using Theorems~\ref{theo:hasanunboundedcomponent} and \ref{theo:hasnounboundedcomponent}, because it is impossible for an open set in $\R$ to satisfy the conditions of Theorem~\ref{theo:hasnounboundedcomponent}.

\begin{theorem} \label{theo:nounb}
A non-empty open subset of $\R$ is an $I_1$-set if and only if it has an unbounded component.
\end{theorem}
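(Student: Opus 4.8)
The plan is to prove the two implications separately, using Theorems~\ref{theo:hasanunboundedcomponent} and \ref{theo:hasnounboundedcomponent} as black boxes, and exploiting the special feature of dimension one that the connected components of an open subset of $\R$ are open intervals. For sufficiency, suppose $S$ is a non-empty open subset of $\R$ possessing an unbounded component $V$. Then $V$ is an unbounded open interval, so it has one of the forms $(a,+\infty)$, $(-\infty,b)$, or $\R$. In each case infinity is accessible from $S$: for instance, if $V=(a,+\infty)$, then the curve $\gamma(t)=a+1+t$ lies in $V\subset S$ and satisfies $\gamma(t)\to\infty$, and the other cases are handled the same way. Since $S$ is open and unbounded, Theorem~\ref{theo:hasanunboundedcomponent} then shows directly that $S$ is an $I_1$-set.

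For necessity I would argue by contradiction, which amounts to showing that no open subset of $\R$ without an unbounded component can satisfy the conclusion of Theorem~\ref{theo:hasnounboundedcomponent}. Suppose $S$ is a non-empty open $I_1$-set with no unbounded component. Theorem~\ref{theo:hasnounboundedcomponent} then supplies a sequence $(V_k)_{k\isnatural}$ of components of $S$ and a number $r>0$ satisfying \ref{t21} and \ref{t22}. Each $V_k$ is a bounded open interval, say $V_k=(a_k,b_k)$. Because any fixed bounded interval has finite supremum of $|x|$, condition \ref{t21} forces infinitely many of the $V_k$ to be distinct, and after passing to a subsequence I may assume that the far endpoints escape, say $b_k\to+\infty$ (the case $a_k\to-\infty$ being symmetric after reflecting through the origin). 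For all large $k$ we then have $b_k>r+1$; meanwhile condition \ref{t22} forces $a_k\le r$, since either the interval meets the origin, in which case $a_k<0\le r$, or else $\inf\{|x|:x\in V_k\}=a_k\le r$. Consequently each such interval contains the single point $r+\tfrac12$. Since distinct components of $S$ are pairwise disjoint, infinitely many of them cannot all contain $r+\tfrac12$, and this contradiction shows that $S$ must have an unbounded component.

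The whole weight of the argument sits in the necessity direction, and within it in the step converting the two metric conditions into the observation that the intervals overlap. The one-dimensional geometry is what makes this work: in $\R$, a connected set that reaches within distance $r$ of the origin and also extends out to distance exceeding $r+1$ must contain the entire segment between a near point and a far point, hence must contain $r+\tfrac12$, and disjoint components cannot all do this. I expect no genuine obstacle here beyond bookkeeping the two cases (far endpoint positive versus negative) and checking that the selected components are genuinely distinct; the interest lies in contrast with higher dimensions, where components can weave around one another and the analogue of this pigeonhole step fails, which is precisely why Theorem~\ref{theo:hasnounboundedcomponent} does not close the classification when $d\ge 2$.
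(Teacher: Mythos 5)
Your proof is correct and takes essentially the same route as the paper, which disposes of the theorem in one line by citing Theorems~\ref{theo:hasanunboundedcomponent} and~\ref{theo:hasnounboundedcomponent} and remarking that no open subset of $\R$ can satisfy the conditions of the latter. Your subsequence extraction and the pigeonhole step showing that infinitely many pairwise disjoint component intervals would all have to contain $r+\tfrac12$ is exactly the detail the paper leaves implicit, and it is carried out soundly.
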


Our next result shows that even in one dimension, escaping sets can have a complicated structure.

\begin{theorem}\label{theo:R1irrational}
The set of irrationals is an $I_1$-set.
\end{theorem}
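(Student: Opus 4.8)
The plan is to realise $\R\setminus\Q$ as the \emph{non-escaping} set of a continuous map built as a ``continued-fraction tower''. The guiding principle is that a number is rational precisely when its continued-fraction expansion terminates, so I want $f$ to send the terminating expansions into a bounded trap while the non-terminating ones are carried off to infinity. Concretely, I would track a point by a \emph{state} $t\in[0,1]$, evolving under a continued-fraction-type map with $0$ as a neutral fixed point, together with a \emph{position} that drifts towards $+\infty$: rational starting points reach the state $0$ after finitely many steps and then halt, giving a bounded (indeed finite) orbit, whereas irrational starting points wander in $(0,1]$ forever and are pushed steadily to infinity. There is no formal obstruction of the kind identified earlier, since $I(f)$ is automatically an unbounded $\good$ set and $\R\setminus\Q$ is itself $\good$, being $G_\delta$ and hence a countable intersection of (open, so $F_\sigma$) sets.

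To keep the bookkeeping clean I would first exploit the homeomorphism invariance of the class of $I_1$-sets to replace $\Q$ by a more convenient countable dense set. Any two countable dense subsets of $\R$ differ by a homeomorphism of $\R$ — extend an order isomorphism between them, which exists by Cantor's theorem, to all of $\R$ — and such a homeomorphism extends to $\overline{\R}$ fixing $\infty$. Thus it suffices to produce a continuous $f$ whose non-escaping set is some convenient countable dense set $D$, and I would take $D$ to be the dyadic rationals. These are exactly the points reaching the fixed point $0$ in finitely many steps under the continuous tent map $T(x)=1-|1-2x|$ of $[0,1]$, which replaces the analytically awkward Gauss map by a transparent piecewise-linear ``termination'' condition.

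The construction then lays $\R$ out as a chain of intervals marching to $+\infty$, on each of which $f$ performs one step of the state map $T$ together with a forward \emph{drift} $h(t)$, routing terminating orbits into a fixed bounded trap and passing non-terminating orbits along the chain. The crux — and the step I expect to be genuinely delicate — is \emph{continuity}, because the decision ``halt versus advance'' is not local. Both obvious skew-product recipes fail: if one keeps the state map continuous, then matching at the interval seams forces the per-step advance to be a constant, so every orbit drifts at the same rate and nothing is trapped; whereas if one uses the genuine Gauss map, the advance needed to cancel its discontinuities is essentially $\lfloor 1/t\rfloor$, which blows up as $t\to0^+$, so that it is the \emph{terminating} orbits that are flung to infinity — exactly backwards. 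The resolution I would pursue is to make the drift $h$ vanish at the trap, so that points near the trap advance negligibly and continuity there is restored, while keeping $h$ bounded below away from the trap; the state-map boundary values, the seam gluing, and the drift must be arranged to be mutually compatible, and reconciling these is where the real work lies.

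It then remains to verify three things. First, continuity of $f$, which reduces to checking the matching conditions at the seams and at the trap, where $h$ vanishes. Second, that every point of $D$ has a bounded orbit, which is immediate once it falls into the trap. Third, and most importantly, that every point outside $D$ escapes: here I would use that a non-terminating orbit of $T$ cannot remain in $[0,1/2)$ (doubling ejects it) nor converge to the repelling fixed point $0$, so it returns to a region bounded away from $0$ infinitely often and thereby receives infinitely many drift increments bounded below, forcing the position to $+\infty$. Concluding that $I(f)=\R\setminus D$, I would finally transport $f$ by the homeomorphism from the reduction to obtain a continuous map with escaping set exactly $\R\setminus\Q$.
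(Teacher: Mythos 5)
Your outer reduction is exactly the paper's: both arguments construct a continuous map whose \emph{non-escaping} set is a countable dense subset $D$ of $\R$, and then transport by a homeomorphism of $\overline{\R}$ fixing $\infty$ that carries $D$ to $\Q$ (Sierpi\'nski). That part of your proposal is correct and complete. The gap is in the heart of the matter: you never actually produce the function $f$, and the strategy you sketch for producing it is internally inconsistent at precisely the point you yourself flag as delicate. You correctly identify the obstruction -- in one dimension a ``state times position'' skew product forces the per-step advance to be a constant integer if both the state map and $f$ are to be continuous across the seams -- but your proposed fix (a real-valued drift $h$ that vanishes at the trap and is bounded below away from it) destroys the skew-product bookkeeping on which your verification rests. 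Once the drift takes non-integer values, the image of a point with state $t$ no longer has state $T(t)$: the fractional parts get scrambled, so the actual orbits of $f$ are not governed by the tent map at all, and your third verification step (``a non-terminating orbit of $T$ returns to $[1/2,1]$ infinitely often, hence receives infinitely many drift increments bounded below'') does not apply to $f$. Separately, ``routing terminating orbits into a fixed bounded trap'' conflicts with continuity along the chain: if the state-$0$ point of a far-out interval maps back to a single bounded trap, nearby points must too, which is incompatible with nearby points being carried forward; the only coherent version of your fix is that each state-$0$ point is \emph{fixed in place}, i.e.\ the ``trap'' is a discrete dense set of fixed points, not one bounded region.

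That coherent version is exactly what the paper does, and it does so by abandoning the fibred picture entirely. The paper's $f$ is a single explicit piecewise-linear map with $f(x)\geq x$ everywhere, fixed points at the points $n$ and $n+\tfrac12$ for $n\in\N$, and slopes of modulus at least $3$ elsewhere on $\left(\tfrac34,+\infty\right)$. Monotonicity of orbits plus an overshoot estimate (if $f^k(y)$ lies within $\tfrac14$ of a fixed point $p$ without equalling $p$, then $f^{k+1}(y)>p$) shows a bounded orbit must land \emph{exactly} on a fixed point after finitely many steps, so the non-escaping set is the countable set of eventual-fixed points; the expansion then shows every interval eventually meets both $I(f)$ and its complement, giving the density needed for Sierpi\'nski. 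To repair your write-up you would either have to carry out the seam-gluing with all the compatibility conditions you defer (``reconciling these is where the real work lies'' is the proof, not a remark), re-proving the halt-or-escape dichotomy for the genuine one-dimensional map rather than for the idealised tent dynamics, or simply replace the tower by a direct construction of the paper's type. As it stands, the proposal is a correct reduction plus a plan whose central construction and its continuity and dichotomy verifications are missing.
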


It is well known (see, for example, \cite{Sierpinski1920}) that if $X$ is a countable, dense subset of $\R$, then there is a homeomorphism $\phi:\overline{\R}\to\overline{\R}$ with $\phi(\infty)=\infty$ and $\phi(X)=\Q$. Therefore the complement of $X$ in $\R$ is an $I_1$-set.

In view of Theorem~\ref{theo:R1irrational}, it is natural to ask whether the set of integers, or the set of rational numbers, is an $I_1$-set. It follows from the next result that this is not the case.

\begin{theorem}\label{theo:R1uncountable}
Suppose that $S$ is an $I_1$-set, and that $x \in S$. Then every neighbourhood of $x$ contains uncountably many elements of $S$.
\end{theorem}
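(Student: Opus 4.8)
The plan is to prove the statement directly and by contradiction. Fix $x\in S=I(f)$ and a bounded open interval $U\ni x$; the goal is to show that $S\cap U$ is uncountable, so suppose instead that it is countable. Then the non-escaping points are co-countable in $U$, hence dense, and I may choose a small closed interval $K=[a,b]\subset U$ with $x$ in its interior and with $a,b\notin S$. Since $x\in I(f)$ we have $|f^n(x)|\to\infty$, and the whole argument is driven by applying the intermediate value theorem to the iterates $f^n$ on $K$, together with the complete invariance of $I(f)$ noted earlier (a point lies in $I(f)$ if and only if any of its images does).

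First I would dispose of an easy case by examining the diameters of the image intervals $f^n(K)$. If $\sup_n\operatorname{diam}f^n(K)<\infty$, then, since $f^n(K)$ is an interval containing $f^n(x)$, every point $y\in K$ satisfies $|f^n(y)|\ge|f^n(x)|-\operatorname{diam}f^n(K)\to\infty$; thus $K\subset I(f)$, and $S\cap U$ contains a whole interval, contradicting countability. Hence I may assume $\operatorname{diam}f^n(K)\to\infty$ along a subsequence; informally, the coexistence of the escaping point $x$ with nearby non-escaping points forces the images of $K$ to stretch without bound.

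The heart of the proof is then to build a Cantor set of escaping points inside $K$ by a pullback tree. I would construct pairwise disjoint closed intervals $J_s$ indexed by finite binary strings $s$, with $J_{s0},J_{s1}\subset J_s$ and diameters tending to zero, together with an increasing sequence of times split into consecutive blocks, maintaining two invariants: (i) each $J_s$ contains an escaping point in its interior, to be used as a ``peak''; and (ii) on the time-block associated with level $k$, every point of each $J_s$ with $|s|=k$ has $|f^m(\cdot)|\ge p_k$, where $p_k\to\infty$. Branching at a node would come from a fold: at a suitable large iterate the interior escaping point attains a value of large modulus, while the stretching of $K$ supplies descent on both sides, so a target value just below the peak is attained in two disjoint subintervals. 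I would aim these targets at far-out points of the forward orbit of the peak, so that, by complete invariance, each child again contains an escaping point in its interior, restoring (i). Invariant (ii), enforced using uniform continuity of finitely many iterates on compacta, guarantees that every point of the resulting Cantor set has $|f^m(\cdot)|\ge p_k$ throughout block $k$; as the blocks exhaust the positive integers and $p_k\to\infty$, each such point escapes. This yields uncountably many escaping points in $U$, the desired contradiction.

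I expect the main obstacle to be the branching step, specifically securing a genuine two-sided fold at a single iterate: one needs a time $n$ at which $|f^n(x)|$ is large while $f^n$ also dips to smaller modulus on both flanks of the peak inside $K$, and the ``low times'' of the left and right flanks need not coincide a priori. I would try to arrange this by choosing the flanking descent points among the dense non-escaping points of $K$ and passing to a common subsequence of iterates on which both flanks are low and the peak is high, and then coordinating this with the block-height bookkeeping of invariant (ii). Verifying that such choices can be made simultaneously at every node of the tree, while keeping the intervals nested with shrinking diameters, is where the real work lies; the easy diameter case above and the complete invariance of $I(f)$ are the two facts I would lean on most.
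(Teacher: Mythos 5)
Your overall architecture---nested compact intervals, a binary-tree/Cantor construction, and the use of complete invariance to pull escaping points back under the intermediate value theorem---matches the paper in spirit: your nested-sets step is exactly the paper's Lemma~\ref{lemm:sequence}, and your easy case (if $\operatorname{diam} f^n(K)$ stays bounded then $K\subset I(f)$, contradicting countability) is correct. But there is a genuine gap precisely where you flagged one, and the repair you propose does not work. Your branching step needs a single time $n$ at which $f^n$ has large modulus at the interior ``peak'' while dipping to small modulus on \emph{both} flanks inside $K$. You suggest securing this by taking the flanking points $a,b$ from the dense set of non-escaping points and passing to a common subsequence of times at which both are bounded. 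A non-escaping point, however, only gives boundedness of its orbit along \emph{some} infinite set of times, and the bounded-time sets of $a$ and of $b$ may be disjoint (for instance, $a$ returning near the origin only at even times and $b$ only at odd times); nothing in your setup rules this out, so there need be no time at which both flanks are low while the peak is high, and hence no two-sided fold. Without the fold you get at most one covering subinterval per stage, and the tree never branches.

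This failure mode is exactly what the paper's Lemma~\ref{jdf} is engineered to handle. There the line is tiled by the orbit-gap intervals $B_k^{\pm}$ determined by the orbit of $x$, and one studies covering sequences $C_{k+1}\subset f(C_k)$. When more than one covering sequence starts at each interval, one argues essentially as you intend, via Lemma~\ref{lemm:sequence}. But there is a rigid case---a unique covering sequence, which forces $f(C_k)=C_{k+1}$ exactly, i.e.\ the map marches the gap intervals forward with no fold at any time---and there uncountability comes from a different mechanism: after reducing to $C_0=[x,f^m(x)]$ and setting $g=f^m$, one uses fixed points $t_j$ of $g$ arbitrarily far out (if these are bounded above, an entire half-line $(K,+\infty)$ lies in $I(f)$ and the conclusion is immediate) and encodes an arbitrary binary sequence by choosing, at each stage, whether the itinerary pauses in $[t_j, g^{k_j+1}(x)]$ before moving on. Your outline contains no counterpart to this fixed-point ``pausing'' device, which is the actual content needed when folds are unavailable; supplying it (or proving that the no-fold scenario cannot occur, which the paper does not attempt and which appears false in general) is what remains to make your argument complete. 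The part of your plan that is sound---producing uncountably many escaping points somewhere and transporting them into the given neighbourhood by surjectivity of $f^{m_k}$ on the interval joining $v$ to $x$, plus complete invariance---is indeed how the paper concludes, except that the paper first produces the uncountable set far out, in a compact $J\subset(R,+\infty)$, and then pulls back, rather than building the Cantor set inside $U$ directly.
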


Our final result in the one-dimensional setting can be seen as step towards classifying $I_1$-sets.

\begin{theorem}\label{theo:R1open}
If $S$ is an $I_1$-set, then all components of the complement of $S$ are closed.
\end{theorem}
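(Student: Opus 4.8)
The plan is to argue by contradiction, first reducing the statement to a single local configuration and then exploiting the intermediate value theorem together with the forward invariance of the complement. Write $C=\R\setminus I(f)$ for the complement of our $I_1$-set $S=I(f)$. Since $I(f)$ is completely invariant, $C$ is too; in particular $f(C)\subseteq C$, and hence $f^n(C)\subseteq C$ for every $n$. Each component of $C$ is a (possibly degenerate) interval, and such an interval is closed in $\R$ precisely when it contains each of its finite endpoints. A degenerate component is a single point, hence closed, so I may assume the component $J$ is nondegenerate. If $a$ is a finite endpoint of $J$ with $a\notin J$, then $a\notin C$ (otherwise $J\cup\{a\}$ would be a connected subset of $C$ strictly larger than the component $J$), so $a\in I(f)$; and since $J$ is a nondegenerate interval with endpoint $a$, one side of $a$ contains a nondegenerate interval lying in $J\subseteq C$. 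Thus it suffices to derive a contradiction from the following situation: $a\in I(f)$ and, say, $(a,a+\delta)\subseteq C$ for some $\delta>0$ (the left-hand case being symmetric).

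Next I set up the intermediate value argument. Fix any $c\in(a,a+\delta)$; as $c$ is non-escaping there are a constant $M$ and infinitely many times $n$ with $|f^{n}(c)|\le M$, say $n$ ranging over an infinite set $A$. Meanwhile $|f^{n}(a)|\to\infty$, so along $A$ we may pass to a subsequence $(n_j)$ on which $f^{n_j}(a)$ diverges with constant sign; suppose $f^{n_j}(a)\to+\infty$ (the case $-\infty$ is symmetric). The orbit points $f^{n_j}(a)$ are themselves escaping, so I may fix one of them, say $p=f^{n_{j_0}}(a)$, with $p>M$. Now for every large $j$ we have $f^{n_j}(c)\le M<p<f^{n_j}(a)$, so $p$ lies strictly between the two values $f^{n_j}(c)$ and $f^{n_j}(a)$. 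Since $f^{n_j}\big([a,c]\big)$ is an interval containing both of these values, the intermediate value theorem gives $w\in[a,c]$ with $f^{n_j}(w)=p$. As $f^{n_j}(a)=f^{n_j}(a)\neq p$ we must have $w\neq a$, so $w\in(a,c]\subseteq C$; but then $p=f^{n_j}(w)\in f^{n_j}(C)\subseteq C$, contradicting $p\in I(f)$. This contradiction shows every finite endpoint of $J$ lies in $J$, so $J$ is closed, which is what we want.

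The step I expect to be the main obstacle is recognising \emph{which} contradiction to aim for. The natural first instinct -- to produce an escaping point inside the non-escaping interval by pulling back an escaping point under some iterate -- is blocked precisely by the forward invariance $f^{n}(C)\subseteq C$, which guarantees that images of the non-escaping interval can never be escaping. The correct move is the reverse: one uses the intermediate value theorem to realise a \emph{fixed} escaping point $p$ as a \emph{value} $f^{n_j}(w)$ attained at an interior (hence non-escaping) point $w$, and it is then forward invariance that supplies the contradiction. The remaining delicate point is the bookkeeping of subsequences: one must secure the bounded-orbit times of $c$ and the large-orbit behaviour of the endpoint $a$ \emph{simultaneously}, which works because $|f^n(a)|\to\infty$ holds along every subsequence, so that after restricting to the times $A$ coming from $c$ one can still extract a subsequence on which $f^{n_j}(a)$ diverges with a definite sign and so pins down a usable target $p$.
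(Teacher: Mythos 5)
Your proof is correct and is essentially the paper's own argument: both use the intermediate value theorem along a sequence of times where the orbit of a non-escaping point is bounded while the orbit of the escaping endpoint diverges with a fixed sign, and both contradict forward invariance of the complement by locating an iterate of the escaping endpoint (your fixed target $p$; the paper's iterates of $b$ inside $[R,f^{n_k}(b))$) in the image of an interval of non-escaping points. The only differences are cosmetic -- you take the bounded-orbit point $c$ in the interior rather than at the endpoint of the interval, and there is a harmless typo where you write $f^{n_j}(a)=f^{n_j}(a)\neq p$ instead of $f^{n_j}(a)>p$.
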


\subsection{Higher dimensions}

Classifying $I_d$-sets when $d\geq 2$ is far more difficult than classifying $I_1$-sets. We give several examples that illustrate the differences between the one- and higher-dimensional cases. Here are brief descriptions of three such examples.

\begin{enumerate}[label=(\roman*),leftmargin=20pt,topsep=0pt,itemsep=0pt]
\item In contrast to Theorem~\ref{theo:R1open}, we give an $I_2$-set, the complement of which is a domain (Example~\ref{example.halfplane}).
\item In contrast to Theorem~\ref{theo:R1uncountable}, we give a countable  nowhere dense set, which is an $I_2$-set (Example~\ref{example.Z}).
\item In contrast to the previous example, we give a countable nowhere dense set, the complement of which is an $I_2$-set (Example~\ref{example.Zcomp}).
\end{enumerate}

Although, for simplicity, all these examples are in two dimensions, it is clear that they can be extended to any dimension greater than one.

\subsection{Structure of the paper}

In Section~\ref{sec:Rmore} we prove Theorems~\ref{theo:hasanunboundedcomponent} and~\ref{theo:hasnounboundedcomponent}. Then, in Section~\ref{sec:R1}, we prove Theorems~\ref{theo:R1irrational}, \ref{theo:R1uncountable} and~\ref{theo:R1open}. Finally, in Section~\ref{sec:examples} we give all the examples mentioned above.

%
%
%
%
%
%
\section{Open sets}\label{sec:Rmore}

We require the following lemma, which is a variation, in Euclidean space, of the Tietze extension theorem; see, for example, \cite[Theorem 15.8]{MR0264581}. In the proof of the lemma (and elsewhere in the paper) we use the notation $\text{dist}(x, K)$ to represent the Euclidean distance from a point $x$ to a set $K$ in $\R^d$; that is, $\text{dist}(x, K)= \inf_{y\in K} |x-y|$.

\begin{lemma}\label{lemma:open}
Suppose that $K$ is a closed subset of $\R^d$ and $\phi : K \to [0,+\infty)$ is a continuous function. Then there exists a continuous function $\psi : \R^d \to [0,+\infty)$ such that $\psi|_K = \phi$ and $\phi^{-1}(0) = \psi^{-1}(0)$.
\end{lemma}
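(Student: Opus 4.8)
The plan is to prove this strengthened Tietze-type result in two stages. First I would apply the standard Tietze extension theorem to obtain \emph{some} continuous extension, and then I would modify it to enforce the extra condition $\phi^{-1}(0)=\psi^{-1}(0)$, which the raw Tietze extension need not satisfy. The difficulty is entirely in the second stage: an arbitrary continuous nonnegative extension of $\phi$ may vanish at points of $\R^d\setminus K$ even where we do not want it to.

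The key idea for handling the second stage is to add a correction term built from the distance function. Concretely, let $\phi_0:\R^d\to[0,+\infty)$ be \emph{any} continuous extension of $\phi$ supplied by Tietze (composing with the continuous retraction $t\mapsto\max\{t,0\}$ if necessary to keep it nonnegative). The function $x\mapsto\operatorname{dist}(x,K)$ is continuous on $\R^d$, vanishes precisely on $K$ (since $K$ is closed), and is strictly positive off $K$. I would then set
\[
\psi(x) = \phi_0(x) + \operatorname{dist}(x,K).
\]
This $\psi$ is continuous as a sum of continuous functions. On $K$ the distance term vanishes, so $\psi|_K=\phi_0|_K=\phi$, giving the required extension property.

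It remains to verify $\phi^{-1}(0)=\psi^{-1}(0)$. For the inclusion $\phi^{-1}(0)\subset\psi^{-1}(0)$: if $x\in K$ and $\phi(x)=0$, then $\psi(x)=\phi(x)+0=0$. For the reverse inclusion, suppose $\psi(x)=0$; since both summands are nonnegative, we must have $\phi_0(x)=0$ and $\operatorname{dist}(x,K)=0$. The latter forces $x\in K$ (as $K$ is closed), whence $\phi_0(x)=\phi(x)$, so $\phi(x)=0$, meaning $x\in\phi^{-1}(0)$. Note that for $x\notin K$ we automatically have $\psi(x)\geq\operatorname{dist}(x,K)>0$, so no spurious zeros are introduced off $K$ — this is exactly the point at which the correction term does its work.

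The only genuine subtlety, which I would flag, is the hypothesis that $\phi$ maps into $[0,+\infty)$: this nonnegativity is what makes the two summands in $\psi$ cooperate rather than cancel, and it is essential for the argument that $\psi(x)=0$ forces \emph{both} terms to vanish. The proof therefore leans on the closedness of $K$ (twice: once so that $\operatorname{dist}(\cdot,K)$ detects membership in $K$, and implicitly so that Tietze applies) and on the sign condition, and is otherwise a short and routine verification.
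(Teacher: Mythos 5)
Your proof is correct, but it takes a different route from the paper. The paper does not invoke the Tietze theorem as a black box; instead it writes down a single explicit formula,
\[
\psi(x) \;=\; \inf_{y \in K} \left( \phi(y) + \frac{|x-y|}{\operatorname{dist}(x,K)} - 1 \right) + \operatorname{dist}(x,K)
\quad\text{for } x \notin K,
\]
with $\psi=\phi$ on $K$ (a modification of the classical explicit Hausdorff-type extension formula), and leaves the verification as ``easy to check.'' Interestingly, the essential mechanism is the same in both arguments: the additive term $\operatorname{dist}(x,K)$ appears in the paper's formula for exactly the reason you identify, namely to force strict positivity off $K$ and so rule out spurious zeros; note that $|x-y|/\operatorname{dist}(x,K)\geq 1$ for $y\in K$, so the infimum in the paper's formula is nonnegative and the distance term makes $\psi>0$ outside $K$. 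What your decomposition buys is a shorter and more transparent argument: you outsource the (nontrivial) continuity verification to a standard citation, and the zero-set bookkeeping becomes a two-line check, with the role of nonnegativity cleanly isolated. What the paper's version buys is self-containedness -- one formula that does extension and zero-set control simultaneously, at the cost of a hidden continuity check across $\partial K$. Two small points on your write-up: you do need the unbounded ($\R$-valued) form of Tietze, since $\phi$ maps into $[0,+\infty)$ rather than a compact interval -- this is standard and is precisely what the paper's reference (Willard, Theorem 15.8) provides; and your fix of post-composing with $t\mapsto\max\{t,0\}$ is legitimate because that map is continuous and fixes $[0,+\infty)$ pointwise, so it does not disturb the values on $K$.
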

\begin{proof}
Let
\begin{equation*}
\psi(x) =
\begin{cases}
\phi(x), &\qfor x \in K, \\
\inf_{y \in K} \left( \phi(y) + \dfrac{|x-y|}{\operatorname{dist}(x, K)} - 1 \right) + \operatorname{dist}(x, K), &\qfor x \notin K.
\end{cases}
\end{equation*}
It is easy to check that $\psi$ has the required properties.
\end{proof}
\begin{proof}[Proof of Theorem~\ref{theo:hasanunboundedcomponent}]
Suppose that $U$ is an unbounded component of an open set $S$ in $\R^d$, and that $\gamma:[0,+\infty)\to U$ is an unbounded curve that does not accumulate at any point of $\R^d$. Clearly $\R^d$ is an $I_d$-set, so we can assume that $U \ne \R^d$. Choose a point $x_1$ in the image of $\gamma$. Consider the set of straight lines emanating from $x_1$. At least one of these lines must meet $\partial U$. Let the first point where one such line meets $\partial U$ be the point $x_0$. Without loss of generality -- replacing $x_1$ with another point of $\gamma$ if necessary -- we can assume that $\gamma$ does not intersect the open line segment from $x_0$ to $x_1$. After reparameterizing we can assume that $\gamma(1)=x_1$.

Now let $\Gamma : [0,+\infty) \to U\cup\{x_0\}$ be an unbounded curve comprised of the line segment from $x_0$ to $x_1$ followed by the part of $\gamma$ from $x_1$ to $\infty$; explicitly,
\[
\Gamma(t) =
\begin{cases}
tx_1+(1-t)x_0, &\qfor 0\leq t\leq 1,\\
\gamma(t), &\qfor t\geq 1.
\end{cases}
\]
We denote the image of $\Gamma$ by $|\Gamma|$; that is, $|\Gamma| = \Gamma([0,+\infty))$.

We now define a continuous function $f : \R^d \to \R^d$ such that $I(f) = S$. First set $f(x) = x_0$, for $x \notin S$. Thus we certainly have that $I(f) \subset S$.

Next we define $f$ on $|\Gamma|$. Define $h : [0,+\infty)\to[0,+\infty)$ by $h(t) = 2t$. Then set
\[
f(x) = (\Gamma \circ h \circ \Gamma^{-1})(x),\quad \text{for}\quad x \in |\Gamma|.
\]
This definition of $f$ agrees with the previous definition at the point $x_0$. As $f$ is conjugate to $h$ on $|\Gamma|$, it follows from the fact that $\Gamma(t)\rightarrow\infty$ as $t\rightarrow\infty$ that $|\Gamma|\setminus\{x_0\} \subset I(f)$.

Next, if $V$ is a component of $S$ other than $U$, then we define
\[
f(x) = \Gamma\left(\operatorname{dist}(x,\partial V)\right),\quad \text{for}\quad  x \in V.
\]
Since $f(x)\rightarrow\Gamma(0) = x_0$ as $x\rightarrow v$, for $v\in\partial V$, this definition ensures that $f$ is continuous in $\R^d\setminus U$. Moreover, $f$ maps all points of $S\setminus U$ to $|\Gamma|\setminus\{x_0\}$. Hence $S\setminus U \subset I(f)$.

It remains to define $f$ in $U \setminus |\Gamma|$. To do this we first use Lemma~\ref{lemma:open} to construct an auxiliary function. Let $K = (\R^d \setminus U) \cup |\Gamma|$. Note that, since $\Gamma$ does not accumulate in $\R^d$, $K$ is closed. Define a continuous function $\phi : K \to [0,+\infty)$ by
\[
\phi(x) =
\begin{cases}
\Gamma^{-1}(x), &\qfor x \in |\Gamma|, \\
0, &\text{otherwise}.
\end{cases}
\]
It follows, by Lemma~\ref{lemma:open}, that there exists a continuous function $\psi~:~\R^d~\to~[0,+\infty)$ such that $\psi|_K = \phi$ and $\psi^{-1}(0) = \phi^{-1}(0) = \R^d \setminus U$. We can now see that
\begin{enumerate}[label=(\alph*)]
\item $\psi(x) = 0$, for $x \notin U$;
\item $\psi(x) > 0$, for $x \in U$;
\item $\psi(x) = \Gamma^{-1}(x)$, for $x \in |\Gamma|$.
\end{enumerate}
To finish, we define $f(x) = (\Gamma \circ h \circ \psi)(x)$, for $x \in U \setminus |\Gamma|$. Conditions (a) and (c)  ensure that this definition of $f$ is consistent with previous definitions, and that $f : \R^d \to \R^d$ is continuous. Condition (b)  ensures that if $x \in U$, then $f(x) \in |\Gamma| \setminus \{ x_0 \}$. It follows that $U = I(f)$, as required.
\end{proof}

In the remainder of the paper we write $\No$ for the set $\N\cup\{0\}$.

\begin{proof}[Proof of Theorem~\ref{theo:hasnounboundedcomponent}]
Suppose that $S=I(f)$, where $f : \R^d \to \R^d$ is a continuous function, and, as stated in the hypotheses of the theorem, suppose that $S$ is open with no unbounded components. Let $U$ be a component of $S$. Let $U'$ be the component of  $S$ containing $f(U)$. Clearly $U \ne U'$. Notice also that, by continuity, $f(\partial U)\subset \partial U'$.

Using these observations we can construct a sequence $(U_i)_{i\in\No}$ of components of $S$ such that $f(U_{i-1})\subset U_i$ and $f(\partial U_{i-1})\subset \partial U_i$, for $i\isnatural$. Clearly the sets $U_i$ are pairwise disjoint. Now choose any point $x_0$  in $\partial U_0$; because $x_0\notin I(f)$, there is a sequence $(k_i)_{i\isnatural}$ of positive integers and a positive constant $r$ such that $|f^{k_i}(x_0)|\leq r$, for $i\isnatural$. Define $V_i=U_{k_i}$, for $i\isnatural$. Since $f^{k_i}(x_0)\in \partial V_i$, we see that $\inf\{|x|\,:\,x \in V_i\} \leq r$, for $i\isnatural$. Also, $\sup\{|x|\,:\, x \in V_i\} \rightarrow +\infty$ as $i\rightarrow\infty$ because $f^{k_i}(U_0)\subset V_i$ and $ U_0$ is contained in the escaping set of $f$. This completes the proof of the theorem.
\end{proof}
%
%
%
%
\section{One dimension}\label{sec:R1}

In this section we prove various results about $I_1$-sets; later we will see that there are no comparable results for $I_d$-sets, where $d>1$. Before we begin we have some remarks about notation. One way in which the real line $\R$ is distinguished among the spaces $\R^d$, for $d\in\N$, is that it has two topological ends (two `infinities'), whereas all the other spaces have one end. A consequence of this is that  statements about convergence to infinity in one dimension can be ambiguous. So far we have followed the convention that if $(x_n)_{n\in\mathbb{N}}$ is a sequence in $\R^d$, then the statement $x_n\to\infty$ as $n\to\infty$ means that the sequence eventually leaves any compact subset of $\R^d$. We will continue to follow this convention, even when $d=1$. In the one-dimensional case, we write $x_n\to +\infty$ as $n\to\infty$ to mean that the sequence $(x_n)_{n\in\mathbb{N}}$ eventually leaves any interval of the form $(-\infty,K)$, and we write $x_n\to -\infty$ as $n\to\infty$ if $-x_n\to+\infty$ as $n\to\infty$. (In fact, we have used this notation for convergence to $+\infty$ a couple of times already.)

\begin{proof}[Proof of Theorem~\ref{theo:R1irrational}]

We define a continuous function $f : \R \to \R$ as follows (see Figure~\ref{fig1}):
\[
f(x) = \begin{cases}
   x + 1,    &\qfor x < \frac{3}{4},\\
   4n - 3x, &\qfor n - \frac{1}{4} \leq x < n,  n\isnatural,\\
   5x - 4n, &\qfor n \leq x < n + \frac{1}{4}, n \isnatural, \\
   4n + 2 - 3x, &\qfor n + \frac{1}{4} \leq x < n + \frac{1}{2}, n\isnatural,\\
   5x - 4n - 2, &\qfor n + \frac{1}{2} \leq x < n + \frac{3}{4}, n \isnatural.
\end{cases}
\]

\begin{figure}[ht]
\includegraphics[width=.6\textwidth]{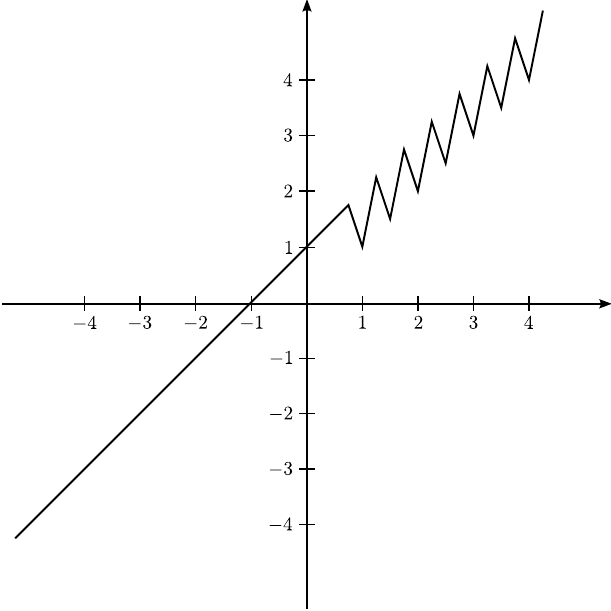}
\caption{Graph of the function $f$}
\label{fig1}
\end{figure}

Notice that $f(x)\geq x$, for $x\in\R$, and if $x>\tfrac34$, then for all but countably many values of $x$, the derivative $f'(x)$ exists and satisfies  $|f'(x)| \geq 3$.

Suppose that $x = n + a$, where $n\isnatural$ and $a \in \left\{-\frac{1}{4}, \frac{1}{4}\right\}$. Then $f(x) = x+1$, and it follows that $x \in I(f)$. On the other hand, suppose that $x = n + b$, where $n\isnatural$ and $b \in \left\{0, \frac{1}{2}\right\}$. Then $f(x) = x$, so $x \notin I(f)$.

Next, suppose that $\Delta$ is a bounded open interval in $\R$. It is easy to see that there exists a positive integer $N$ such that the interval $f^N(\Delta)$ lies in $\left(\frac{3}{4}, +\infty\right)$ and has diameter greater than $\frac{1}{2}$. It follows that $f^N(\Delta)$ meets both $I(f)$ and $\R\setminus I(f)$. Hence $\Delta$ meets both $I(f)$ and $\R\setminus I(f)$, so $I(f)$ and $\R\setminus I(f)$ are dense in $\R$.

Let $y$ be a point of $\R\setminus I(f)$. As $y\leq f(y)\leq f^2(y)\leq \dotsb$, the iterates $f^k(y)$ must be bounded, so there exists a fixed point $p$ of $f$ such that $f^k(y)\rightarrow p$ as $k\rightarrow\infty$. If $f^k(y)$ lies within a distance of $\tfrac14$ from $p$, but is not equal to $p$, then $f^{k+1}(y)$ is greater than $p$, which is impossible. Therefore $f^m(y)= p$, for some positive integer $m$. We deduce that $\R\setminus I(f)$ is countable.

To summarise, the set $\R\setminus I(f)$ is a countable, dense subset of the real line. It  follows that there exists a homeomorphism $\phi : \overline{\R} \to \overline{\R}$ with $\phi(\infty)=\infty$ and $\phi(\R\setminus I(f)) =  \mathbb{Q}$. We deduce that
\[
I(\phi \circ f \circ \phi^{-1})=\R\setminus \Q,
\]
and this completes the proof.
\end{proof}

We use the following two lemmas in the proof of Theorem~\ref{theo:R1uncountable}. Recall that $\No=\N\cup\{0\}$.

\begin{lemma}\label{lemm:sequence}
Suppose that $f : \R^d \to \R^d$ is a continuous function, $(n_k)_{k\in\No}$ is a sequence of positive integers, and $(E_k)_{k\in\No}$ is a sequence of non-empty compact subsets of $\R^d$ such that $E_{k+1} \subset f^{n_k}(E_k)$, for $k\in\No$. Then there exists $y\in E_0$ such that $f^{s_k}(y) \in E_{k+1}$, where $s_k=\sum_{i=0}^{k}n_i$, for $k\in\No$.
\end{lemma}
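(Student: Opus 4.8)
The plan is to realise $y$ as a point of a nested intersection of compact sets and to invoke Cantor's intersection theorem. For each $m\in\No$, let
\[
A_m = \{ x \in E_0 : f^{s_k}(x) \in E_{k+1} \text{ for all } k \text{ with } 0 \le k \le m \}.
\]
Any point $y$ lying in $\bigcap_{m\in\No} A_m$ satisfies the conclusion of the lemma, so it suffices to prove that this intersection is non-empty.

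First I would observe that each $A_m$ is compact. Indeed, $f^{s_k}$ is continuous and $E_{k+1}$ is closed, so $(f^{s_k})^{-1}(E_{k+1})$ is closed; hence $A_m$ is the intersection of $E_0$ with finitely many closed sets, and is therefore a closed subset of the compact set $E_0$. Moreover the sequence $(A_m)_{m\in\No}$ is decreasing, since imposing more constraints can only shrink the set, so that $A_0 \supset A_1 \supset A_2 \supset \dotsb$. By Cantor's intersection theorem, to conclude that $\bigcap_m A_m \ne \emptyset$ it is enough to show that each $A_m$ is non-empty.

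This last point is where the hypothesis $E_{k+1}\subset f^{n_k}(E_k)$ enters, and it is the crux of the argument. Fixing $m$, I would construct a finite backward chain of points. Choose any $z_{m+1}\in E_{m+1}$. Since $E_{m+1}\subset f^{n_m}(E_m)$, there is a point $z_m\in E_m$ with $f^{n_m}(z_m)=z_{m+1}$; repeating this, using $E_{k+1}\subset f^{n_k}(E_k)$ at each stage, yields points $z_k\in E_k$ with $f^{n_k}(z_k)=z_{k+1}$ for $0 \le k \le m$. A short induction then shows that $f^{s_k}(z_0)=z_{k+1}\in E_{k+1}$ for each such $k$: the base case reads $f^{s_0}(z_0)=f^{n_0}(z_0)=z_1$, and for $k\geq 1$ we have $f^{s_k}(z_0)=f^{n_k}\bigl(f^{s_{k-1}}(z_0)\bigr)=f^{n_k}(z_k)=z_{k+1}$, using $s_k=s_{k-1}+n_k$. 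Hence $z_0\in A_m$, so $A_m$ is non-empty.

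The only genuine subtlety is the bookkeeping with the indices $s_k=\sum_{i=0}^k n_i$, and making sure that the backward chain produced from the surjectivity hypothesis matches up with the forward orbit appearing in the definition of $A_m$. Once the definition of $A_m$ is set down correctly, both the compactness observation and the backward-chain argument are routine, and the conclusion follows immediately from the non-emptiness of the nested intersection.
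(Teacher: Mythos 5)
Your proposal is correct and takes essentially the same approach as the paper: your sets $A_m$ are precisely the sets $F_k$ in the paper's proof, and in both arguments $y$ is obtained as a point of the non-empty intersection of this decreasing sequence of non-empty compact sets. The only difference is that you spell out the compactness of each $A_m$ and the backward-chain construction showing $A_m\neq\emptyset$, details the paper asserts without proof.
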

\begin{proof}
Let
\[
F_k = \{x\in E_0\,:\, f^{s_i}(x)\in E_{i+1}, i=0,1,\dots,k\},\quad\text{for}\quad k\in\No.
\]
Then $(F_k)_{k\in\No}$ is a decreasing sequence of non-empty compact sets, so $F = \bigcap_{k=0}^\infty F_k$ is non-empty. We can choose any point $y$ in $F$ to complete the proof.
\end{proof}

In the next lemma, we use the special case of Lemma~\ref{lemm:sequence} when each $n_k=1$ to construct points in an escaping set. Later on, we apply the more general version of Lemma~\ref{lemm:sequence} to construct a point $x$ for which the iterates $(f^k(x))_{k\in\No}$ have a bounded subsequence.

\begin{lemma}\label{jdf}
Let $f:\R\to\R$ be a continuous function. Suppose that there is an element $x$ of $I(f)$ such that the sequence $(f^n(x))_{n\in\N}$ is unbounded above. Then given any positive constant $R$ there exists a compact interval $J\subset (R,+\infty)$ such that $J\cap I(f)$ is uncountable.
\end{lemma}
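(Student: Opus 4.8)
The plan is to build, inside a single compact interval $J\subset(R,+\infty)$, a Cantor-type family of escaping points indexed by the infinite binary sequences, extracting each point from a prescribed itinerary by means of Lemma~\ref{lemm:sequence}. Since $(f^n(x))_{n\in\N}$ is unbounded above, I would first replace $x$ by a point $a=f^m(x)>R$; the forward orbit of $a$ is a tail of that of $x$, so it remains escaping and unbounded above, and no generality is lost.

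The skeleton of the argument is a binary tree of pairwise disjoint compact intervals $\{E_w\}$, indexed by finite binary strings $w$, together with an associated sequence of iteration counts, satisfying two properties: first, each $E_w$ is carried by an iterate of $f$ over both of its children, so that $E_{w0}\cup E_{w1}\subset f^{\,n_{|w|}}(E_w)$ with $E_{w0}$ and $E_{w1}$ disjoint; and second, $\min\{\lvert t\rvert:t\in E_w\}\to+\infty$ as $\lvert w\rvert\to\infty$ along every branch. Granting such a tree, each infinite binary sequence $\omega$ singles out a chain $E_{\omega|_0},E_{\omega|_1},\dots$ to which Lemma~\ref{lemm:sequence} applies (one may arrange $n_k=1$ by tracking the orbit one step at a time between successive branchings), producing a point $y_\omega\in E_\varnothing\subset J$ whose iterates visit the successive intervals of the chain. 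The second property then forces $f^k(y_\omega)\to\infty$, so $y_\omega\in I(f)$, while disjointness of the children guarantees that distinct sequences yield distinct points. As there are uncountably many sequences $\omega$, this exhibits uncountably many points of $I(f)$ in $J$, which is exactly the conclusion of Lemma~\ref{jdf}.

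Everything therefore reduces to constructing the tree, and the genuine obstacle is to guarantee \emph{branching arbitrarily far out}: starting from an interval that meets the escaping orbit, I must find a later interval whose image under some iterate of $f$ is long enough, and far enough out, to contain two disjoint admissible children, and I must do this repeatedly so that the recursion never terminates. Here I would argue by dichotomy. If some nondegenerate subinterval of $(R,+\infty)$ is entirely contained in $I(f)$, then that subinterval is itself uncountable and the lemma holds at once. Otherwise every subinterval of $(R,+\infty)$ contains a point that does not escape, hence a point whose orbit has a bounded subsequence; comparing such a point with the nearby escaping point whose orbit is unbounded above, and applying the intermediate value theorem, shows that a suitable iterate stretches a small interval across an arbitrarily long range reaching far out into $(R,+\infty)$. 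This supplies the long image needed to split off two children, and, because the new subintervals can again be chosen to straddle an escaping point and a non-escaping point, the same reasoning applies to them, allowing the construction to be iterated indefinitely. Verifying that the escaping and non-escaping behaviours can be compared along a \emph{common} sequence of iteration times, so that the stretching really does reach far out at every stage, is the most delicate point, and it is precisely here that the hypothesis that the orbit is unbounded above, rather than merely escaping, does the essential work.
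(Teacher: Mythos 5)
Your reduction of the lemma to the existence of a binary tree of compact intervals with the covering property and with minima diverging along every branch is sound, and in spirit it matches the mechanism of the paper's proof, which likewise feeds nested chains of compact intervals into Lemma~\ref{lemm:sequence} and distinguishes points by their itineraries. The gap is in the construction of the tree, exactly at the point you flag and then leave unresolved. Your stretching argument needs, at a \emph{common} sequence of times $(n_k)$, that the iterates of the non-escaping point $v$ stay bounded while the companion escaping point $a$ is carried far out into $(R,+\infty)$. But the hypothesis gives only one orbit unbounded above: along the bounded-return times of $v$ you know merely that $|f^{n_k}(a)|\to\infty$, and it is entirely possible that $f^{n_k}(a)\to-\infty$ along that subsequence, in which case the interval between $v$ and $a$ is stretched far to the \emph{left} of $R$ and the children cannot be placed in $(R,+\infty)$. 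Worse, even granting one successful stretch, your recursion demands inside each newly chosen child a fresh escaping point whose orbit is unbounded above; the hypothesis supplies only the single countable orbit of $x$, and a small interval far out in $(R,+\infty)$ need not contain any point of that orbit, nor any escaping point at all --- density of suitable escaping points in $(R,+\infty)$ is essentially what the lemma is proving, so it cannot be assumed. Your dichotomy only guarantees non-escaping points in every subinterval (or finishes trivially); it says nothing about where appropriate escaping points lie, so the construction cannot be iterated as claimed.

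The paper's proof circumvents both problems by building its intervals \emph{from the orbit itself}: the collection $\mathfrak{B}$ consists of the intervals $B_k^{\pm}$ between consecutive points of the orbit of $x$, so every endpoint is some $f^k(x)$, its image is again an orbit point, and the intermediate value theorem makes the covering relation automatic ($f(B_k^{\pm})$ always contains $B_{k+1}^{+}$ or $B_{k+1}^{-}$) --- no auxiliary escaping points are ever needed. Branching is then obtained either in space, when some element of $\mathfrak{B}$ admits more than one covering sequence (already yielding uncountably many itineraries), or, in the rigid case where the covering sequence from some $C_0$ is unique, in \emph{time}: after passing to $g=f^m$ with $D_k=[g^k(x),g^{k+1}(x)]$ and $g(D_k)\supset D_{k+1}$, fixed points $t_j$ of $g$ lying in infinitely many $D_{k_j}$ allow the orbit to be delayed by one step at will, and the binary choices delay/no-delay play the role of your two disjoint children. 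This time-delay device (or some substitute for it) is the missing idea in your proposal: spatial branching by stretching alone cannot be forced from the hypotheses, precisely because of the sign problem and the recursion problem described above.
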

\begin{proof}
Observe that no two of the iterates $(f^k(x))_{k\in\No}$ are equal. For each $k\in\No$, let $B_k^+=[f^{k}(x), f^{p_k}(x)]$, where $p_k$ is such that
\[
f^{p_k}(x) = \min_{m\in\No} \{f^m(x)\, :\, f^m(x) > f^{k}(x) \}.
\]
Similarly, let $B_k^-=[f^{q_k}(x), f^{k}(x)]$, where $q_k$ is such that
\[
f^{q_k}(x) = \max_{m\in\No} \{f^m(x)\,:\, f^m(x) < f^{k}(x) \}.
\]
(If the set $\{m\in\No\,:\, f^m(x) < f^{k}(x) \}$ is empty, which can happen for at most one value of $k$, then we define $B_k^-=(-\infty,f^k(x)]$.) Let $\mathfrak{B}$ denote the collection of all the intervals $B_k^+$ and $B_k^-$. These intervals cover $\R$. All of them are bounded on the right, and at most one is unbounded on the left. Any two intervals from the collection $\mathfrak{B}$ are either disjoint or else intersect at one of their end points.

Let $C = (C_k)_{k\in\No}$ be a sequence of intervals from $\mathfrak{B}$. We say that $C$ is a \emph{covering sequence starting at $C_0$} if $C_{k+1} \subset f(C_k)$, for $k\in\No$, and $\inf\{|x|\,:\,x\in C_k\}\rightarrow +\infty$ as $k\rightarrow\infty$. Observe that, for each $k \in \No$, $f(B_k^+)$ contains either $B_{k+1}^+$ or $B_{k+1}^-$, and $f(B_k^-)$ contains either $B_{k+1}^+$ or $B_{k+1}^-$. Thus, if $C_0 \in \mathfrak{B}$, then there exists at least one covering sequence starting at $C_0$.

Suppose that there exists more than one covering sequence starting at each element of $\mathfrak{B}$. Choose any interval $C_0$ of $\mathfrak{B}$ that satisfies $C_0\subset (R,+\infty)$. It is easy to see that there are uncountably many different covering sequences starting at $C_0$. We can apply Lemma~\ref{lemm:sequence} (with $n_k=1$, for $k\in\N$) to obtain an uncountable collection of points in $C_0\cap I(f)$. The lemma follows on choosing $J=C_0$.

Let us assume instead that there is a set $C_0$ in $\mathfrak{B}$ for which there is a unique covering sequence, $C = (C_k)_{k\in\No}$ say, starting at $C_0$. By starting from $C_1$ instead of $C_0$, if necessary, we can assume that $C_0$ is bounded.  If $f(C_0) \ne C_1$, then $f(C_0)$ contains two elements of $\mathfrak{B}$, so there is more than one covering sequence starting at $C_0$, contrary to assumption. Hence $f(C_0) = C_1$, and for the same reason $f(C_k) = C_{k+1}$, for $k\isnatural$.

Let $f^{a}(x)$ and $f^b(x)$ be the end points of $C_0$, where $0\leq a<b$. Then $f^{a+k}(x)$ and $f^{b+k}(x)$ are the end points of $C_k$, for $k\in\No$. Suppose, in order to reach a contradiction, that $f^{b+k}(x)<f^{a+k}(x)$, for $k\in\No$. Let $m=b-a$. By choosing $k=qm+r$, where $r\in\{0,1,\dots,m-1\}$, for values $q=0,1,2,\dotsc$ in turn, we see that
\[
f^{a+r}(x)>f^{a+m+r}(x)>f^{a+2m+r}(x)>\dotsb. 
\]
Therefore the sequence $(f^n(x))_{n\in\No}$ is bounded above, which is a contradiction. Thus, on the contrary, there is an integer $k\in\No$ for which $f^{a+k}(x)<f^{b+k}(x)$, so that $C_k=[f^{a+k}(x),f^{b+k}(x)]$. By replacing $x$ with $f^{a+k}(x)$, and relabelling our covering sequence, we can assume that $C_0=[x,f^m(x)]$. (Making this change increases the size of a finite number of the intervals from the collection $\mathfrak{B}$.) Let $g=f^m$, and let $D_k=C_{mk}$, for $k\in\No$. It follows that $D_k = [g^k(x),g^{k+1}(x)]$ and $g(D_k) = D_{k+1}$, for $k\in\No$. Notice that $I(f)=I(g)$.

We can assume that the set of positive fixed points of $g$ is unbounded above, as otherwise $(K,+\infty)\subset I(f)$, for some number $K$. Let $(k_n)_{n\in\No}$ be an increasing sequence of positive integers such that $D_{k_n}$ contains a fixed point $t_n$ of $g$, for $n\in\No$.

Let $\underline{s} = s_0 s_1 s_2 \ldots$ be an infinite sequence of $0$s and $1$s. There are uncountably many such sequences. We construct a sequence $(E_n)_{n\in\No}$, inductively, for use in Lemma~\ref{lemm:sequence}. Set $E_k = D_k$, for $0~\leq~k~\leq~k_0$. Note that
\[
[t_0, g^{k_0+1}(x)] \cup D_{k_0+1} \subset g([t_0, g^{k_0+1}(x)]) \subset g(D_{k_0}).
\]
If $s_0 = 0$, then define
\[
E_{k_0+1}=[t_0, g^{k_0+1}(x)]\quad\text{and}\quad E_{k_0+2} = D_{k_0+1},
\]
and if $s_0=1$, then define
\[
E_{k_0+1} = D_{k_0+1}.
\]

Suppose now that the values $s_0,\dots,s_p$ have been used to define intervals $E_0,\dots,E_n$, where $E_n=D_q$ for some positive integer $q$. Let $j\isnatural$ be such that $q < k_{j}$. Set $E_{n+k} = D_{q+k}$, for $0 < k \leq k_{j} - q$. Note that
\[
[t_j, g^{k_j+1}(x)] \cup D_{k_j+1} \subset g([t_j, g^{k_j+1}(x)]) \subset g(D_{k_j}).
\]
If $s_{p+1} = 0$, then define
\[
E_{n+k_{j} - q+1} = [t_j, g^{k_j+1}(x)]\quad\text{and}\quad E_{n+k_{j} - q+2} = D_{k_j+1},
\]
and if $s_{p+1}=1$, then define
\[
E_{n+k_{j} - q+1} = D_{k_j+1}.
\]
This completes the definition of the sequence of compact intervals $(E_n)_{n\in\No}$.

Applying Lemma~\ref{lemm:sequence}, we obtain a point $y$ in $E_0$ such that $g^n(y) \in E_n$, for $n\isnatural$. Because $g^n(y)\to+\infty$ as $n\to\infty$, we see that $y \in D_0 \cap I(f)$. Each sequence $\underline{s} = s_0 s_1 \ldots$ generates a different sequence of intervals $(E_n)_{n\in\No}$, and thereby gives rise to a different point in $D_0\cap I(f)$. Therefore $D_0 \cap I(f)$ is uncountable. It follows that $D_k \cap I(f)$ is uncountable, for $k\isnatural$, so the lemma is established on choosing $J=D_k$, for a sufficiently large positive integer $k$.
\end{proof}

\begin{proof}[Proof of Theorem~\ref{theo:R1uncountable}]
Suppose that $f : \R \to \R$ is a continuous function, and that $x \in I(f)$. Let $U$ be a neighbourhood of $x$. We can assume that $U$ meets $\R\setminus I(f)$, as otherwise there is nothing to prove. Choose a point $v$ in $U \cap (\R\setminus I(f))$. Then there exists $R>0$ such that $|f^{n_k}(v)|<R$, for some increasing sequence of positive integers $(n_k)_{k\in\N}$. We know that $|f^{n_k}(x)|\to+\infty$ as $k\to\infty$, and, after conjugating $f$ by the function $x\mapsto -x$ if necessary, we can assume that $f^{m_k}(x)\to +\infty$ as $k\to\infty$, for some subsequence $(m_k)_{k\in\N}$ of $(n_k)_{k\in\N}$. By Lemma~\ref{jdf}, there is a compact interval $J\subset (R,+\infty)$ such that $J\cap I(f)$ is uncountable. Let $L$ denote the closed interval with end points $v$ and $x$. Then we can choose a sufficiently large positive integer $k$ such that $J\subset f^{m_k}(L)$. For each point $y$ in $J \cap I(f)$, there exists a point $z$ in $L$ such that $f^{m_k}(z) = y$, so $z \in I(f)$. Hence $U \cap I(f)$ is uncountable, as required.
\end{proof}

\begin{proof}[Proof of Theorem~\ref{theo:R1open}]
Let $f : \R \to \R$ be a continuous function. Suppose, in order to reach a contradiction, that one of the components of the complement of $I(f)$ is \emph{not} closed. Then, after conjugating $f$ by $x\mapsto -x$ if necessary, we can assume that there is an open interval $(a,b)$ in $\R\setminus I(f)$ such that $b\in I(f)$. Since $a \notin I(f)$, there is a sequence $(n_k)_{k\isnatural}$ of positive integers and $R > 0$ such that $|f^{n_k}(a)| \leq R$, for $k\isnatural$. On the other hand, we know that $|f^{n_k}(b)|\to +\infty$ as $k\to\infty$. By passing to a subsequence, we can in fact suppose that either $f^{n_k}(b)\to+\infty$ as $k\to\infty$ or $f^{n_k}(b)\to -\infty$ as $k\to\infty$; let us assume the former (the other case can be handled in a similar way). Now, for values of $k$ sufficiently large that $f^{n_k}(b)>R$, we have
\[
[R,f^{n_k}(b)) \subset f^{n_k}([a,b)) \subset \R\setminus  I(f).
\]
This is a contradiction, because for sufficiently large values of $k$, the intervals  $[R,f^{n_k}(b))$ contain iterates of $b$ (so they intersect $I(f)$). Therefore, contrary to our assumption, all components of the complement of $I(f)$ are closed.
\end{proof}

%
%
%
%
%
%
\section{examples}\label{sec:examples}

In this section we provide examples that demonstrate differences between the class of $I_1$-sets and the class of $I_2$-sets. Many of our examples are of sets that appeared in Figure~\ref{fig:sets} from the introduction. We shall see that sets (a), (c) and (e) from that figure \emph{are} $I_2$-sets, and sets (b), (d) and (f) are \emph{not} $I_2$-sets.

By Theorem~\ref{theo:R1open}, it is not possible for an $I_1$-set to have an open complementary component. The following example shows that this is not true of $I_2$-sets.

\begin{example}\label{example.halfplane}
Let $f : \R^2 \to \R^2$ be the continuous function given by
\[
f(x,y) =
\begin{cases}
(x, y+1),      &\qfor x \geq 0, \\
(x, y),          &\qfor x \leq -1,\\
(x(x + 2), y + x + 1), &\qfor -1 < x < 0.
\end{cases}
\]
It can be shown by an elementary calculation that $I(f)$ is the closed half-plane $\{(x, y) : x \geq 0 \}$.
\end{example}

In contrast to this example, we observe that the escaping set of a transcendental entire function cannot be closed. This follows from the observation that if, on the contrary, $f$ is a transcendental entire function and $I(f)$ is closed, then the Julia set of $f$, which is equal to $\partial I(f)$, is contained in $I(f)$, which contradicts the well-known fact that the Julia set contains periodic points of $f$.

It follows from Theorem~\ref{theo:R1uncountable} that $\Z$ is not an $I_1$-set. In the next two examples we will show that both  $\Z \times \{0\}$ and its complement are $I_2$-sets.

\begin{example}\label{example.Z}
Let $f : \R^2 \to \R^2$ be the continuous function given by
\[
f(x, y) =
\begin{cases}
  (x+1, |2y| + \sin^2 (\pi x)), &\qfor                       y \leq 1, \\
  (x + 2 - y, (2-y)\sin^2 (\pi x) + 2), &\qfor              1 < y \leq 2, \\
  (x, y), &\qfor y > 2.
\end{cases}
\]
Since $f(n,0) = (n+1,0)$, for $n\in\Z$, we have $\Z \times \{0\} \subset I(f)$. Now suppose that $(x, y) \notin \Z \times \{0\}$. Then $f(x,y)$ lies in the upper half-plane $\{(x, y) : y \geq 0 \}$. It can then be seen, by a calculation, that $f^{k+1}(x,y) = f^k(x,y)$, for all sufficiently large values of $k$, so $(x,y) \notin I(f)$. Therefore $I(f)=\Z \times \{0\}$.
\end{example}

\begin{example}\label{example.Zcomp}
Let $g : \R^2 \to \R^2$ be the continuous function given by
\[
g(x, y) = f(x, y) - (1, 0),
\]
 where $f$ is the function in Example~\ref{example.Z}. It can be shown, using similar methods to those of  Example~\ref{example.Z}, that $I(g) = \R^2 \setminus (\Z \times \{0\})$.
\end{example}

The $I_2$-set $\R^2 \setminus (\Z \times \{0\})$ from the previous example is shown in Figure~\ref{fig:sets}(a). The set $\bigcup_{n\in\mathbb{Z}} D(n,\tfrac14)$, which is shown in Figure~\ref{fig:sets}(b), is not an $I_2$-set, because it does not satisfy the conditions of Theorem~\ref{theo:hasnounboundedcomponent}.

The next two examples are each about classes of sets that \emph{do} satisfy the conditions of Theorem~\ref{theo:hasnounboundedcomponent}. One of the sets from Example~\ref{example.open} is shown in Figure~\ref{fig:sets}(c): we will see that this \emph{is} an $I_2$-set. (Actually, Figure~\ref{fig:sets}(c) is a reflection in the $x$-axis of one of the sets from Example~\ref{example.open}, but this qualification is insignificant.) Example~\ref{example.open} shows that, in contrast to the one-dimensional case, there are open $I_2$-sets without unbounded components. One of the sets from Example~\ref{example.nastyone} is shown in Figure~\ref{fig:sets}(d): we will see that this \emph{is not} an $I_2$-set.

\begin{example} \label{example.open}
In this example (and only in this example), we use polar coordinates: $(r,\theta)$ represents the point in the plane with modulus $r$ and argument $\theta$. We represent the origin by $(0,\theta)$, for any value of $\theta$.

Let $(\rho_n)_{n\isnatural}$, $(\phi_n)_{n\isnatural}$, and $(\phi'_n)_{n\isnatural}$ be increasing sequences of positive real numbers such that $\rho_n\rightarrow +\infty$ as $n\rightarrow\infty$, and
\[
0 < \phi_1 < \phi'_1 < \phi_2 < \phi'_2 < \dots < 2\pi.
\]
Let
\[
A_n = \{ (r, \theta) : 0< r < \rho_n, \ \phi_n < \theta < \phi'_n \}, \quad\qfor n\isnatural;
\]
these are disjoint, bounded open sectors. Let $S = \bigcup_{n\isnatural} A_n$. We will construct a continuous function $f:\R^2\to\R^2$ such that $I(f)=S$, so that $S$ is an $I_2$-set.

First define $f(x) = 0$, for $x \notin S$, so $I(f) \subset S$. Next choose any number $\varepsilon$ in $(0, 1/2)$. The idea is to define $f$ in $S$ such that the following three properties hold: first,
\[
f(A_n) \subset A_{n+1}, \quad\qfor n\isnatural;
\]
second,
\[
\quad f\left(\frac{\rho_n}{1+\varepsilon}, \frac{\phi_n + \phi'_n}{2}\right) = \left(\frac{\rho_{n+1}}{1+\varepsilon}, \frac{\phi_{n+1} + \phi'_{n+1}}{2}\right), \quad\qfor n\isnatural;
\]
and third, the orbit of each point in $S$ eventually joins the orbit
\[
\left(\frac{\rho_1}{1+\varepsilon}, \frac{\phi_1 + \phi'_1}{2} \right)\to \left(\frac{\rho_2}{1+\varepsilon}, \frac{\phi_2 + \phi'_2}{2}\right) \to \cdots \to \left(\frac{\rho_n}{1+\varepsilon}, \frac{\phi_n + \phi'_n}{2} \right) \to \cdots.
\]
It then follows that $S = I(f)$.

To complete the definition of the function $f$, we use the function $h : [0,1] \to [0, 1/(1+\varepsilon)]$ given by
\[
h(t) =
\begin{cases}
t/\varepsilon, &\qfor 0 \leq t \leq \varepsilon/(1+\varepsilon), \\
1/(1+\varepsilon), &\qfor \varepsilon/(1+\varepsilon) < t < 1/(1+\varepsilon), \\
(1-t)/\varepsilon, &\qfor 1/(1+\varepsilon) \leq t \leq 1.
 \end{cases}
\]
Clearly $h$ is continuous, and $h(0) = h(1) = 0$. Moreover, if $x \in (0,1)$, then we have that $h^k(x) = 1/(1+\varepsilon)$, for all sufficiently large values of $k$.

For each $n\isnatural$, and for $(r, \theta) \in A_n$, we define
\[
f(r, \theta) = \left((1+\varepsilon)\rho_{n+1}h\left(\frac{r}{\rho_n}\right)h\left(\frac{\theta - \phi_n}{\phi'_n - \phi_n}\right), \frac{\phi_{n+1} + \phi'_{n+1}}{2} \right).
\]

It is a straightforward calculation to see that $f : \R^2 \to \R^2$ is continuous, and it satisfies the first two of the three properties described above. To see that the third property also holds, choose any point $x$ in $S$. Without loss of generality we can assume that $x \in A_1$. Write $x = (r_1, \theta_1)$ and, in general, write $f^n(x) = (r_{n+1}, \theta_{n+1})$, for $n\isnatural$, where $r_n\geq 0$ and $0\leqslant \theta_n<2\pi$. Observe that $\theta_n = (\phi_n + \phi'_n)/2$, for $n > 1$. Let $\alpha_n = r_n / \rho_n$, for $n\isnatural$. Then $\alpha_{n+1} = h(\alpha_n)$, for $n>1$. Therefore, for sufficiently large values of $n$,  we have $\alpha_{n} = 1/(1+\varepsilon)$, and hence $r_n = \rho_n/(1+\varepsilon)$, as required. This completes our construction of the function $f$.
\end{example}

Our next example shows that an open set can satisfy the conditions of Theorem~\ref{theo:hasnounboundedcomponent} without being the escaping set of a continuous function.

\begin{example}\label{example.nastyone}
Let $D$ be a bounded plane domain, and let $(U_n)_{n\isnatural}$ be a sequence of bounded plane domains such that
\begin{enumerate}[label=(\roman*)]
\item $\overline{U_n} \cap \overline{U_m} = \emptyset$, for $n \ne m$;
\item $U_n \cap D = \emptyset$, for $n\in\N$;
\item for each $n\in\N$ there exists a point $p_n$ such that $\partial U_n\cap \partial D = \{p_n\}$;
\item  $\sup\{|x|\,:\, x \in U_n\}\to+\infty$ as $n\rightarrow\infty$.
\end{enumerate}
Let
\[
S = D \cup \bigcup_{n\isnatural} U_n.
\]
Clearly $S$ satisfies the conditions of Theorem~\ref{theo:hasnounboundedcomponent}. A picture of such a set is given in Figure~\ref{fig:sets}(d).

We claim that $S$ is not an $I_2$-set. To see why this is so, suppose that $S = I(f)$ where $f : \R^2 \to \R^2$ is a continuous function. As we saw earlier, if $W$ is a component of $I(f)$, then there is another, different component $W'$ of $I(f)$ such that $f(W) \subset W'$ and $f(\partial W) \subset \partial W'$. Therefore, after relabelling the sets $U_i$, we can assume that $f(\partial D) \subset \partial U_0$, in which case $f(p_0) \in \partial U_0$. We can also assume that  $f(\partial U_0) \subset \partial U_1$, and hence that $f(p_0) \in \partial U_1$. This is a contradiction, because $\partial U_1$ is disjoint from $\partial U_0$. Therefore, contrary to our assumption, $S$ is not an $I_2$-set.
\end{example}

Our last two examples are illustrated by Figures~\ref{fig:sets}~(e) and~(f), in that order. Both are examples of unbounded open sets from which infinity is not accessible; the first is an $I_2$-set and the second is not an $I_2$-set. In the first example we use the notation $\pi_1(\xi)$ and $\pi_2(\xi)$ to denote the $x$- and $y$-coordinates, respectively, of a point $\xi\in\R^2$.

\begin{example}\label{example:snake}
Let  $A_n = \sum_{k=0}^n 1/(k+1)$, for $n\in\No$. We define an unbounded curve $\Gamma : [0,+\infty) \to \R^2$, as follows. Given any number $t\in[0,+\infty)$, we define $n\in\No$ and $\lambda\in[0,1)$ by the equation $t=n+\lambda$. Then we set
\[
\Gamma(t) =
\begin{cases}
   \left(\lambda A_{n/2}, (2-\lambda)/2^{n+1}\right), &\text{for $n$ even}, \\
   \left((1-\lambda)A_{(n-1)/2}, (2-\lambda)/2^{n+1}\right),& \text{for $n$ odd}.
\end{cases}
\]
One can check that $\Gamma$ satisfies the inequality
\begin{equation}\label{kdl}
|\Gamma(s)-\Gamma(t)|<4A_n|s-t|,\quad\text{when}\quad s,t\in[n,n+2],\quad n\geq 0.
\end{equation}

For simplicity we denote the image set $\Gamma([0,+\infty))$ by $\Gamma$, and we follow similar conventions for other unbounded curves in this example. The image set $\Gamma$ consists of a sequence of line segments, which join the points
\[
\left(0,1\right), \left(A_0,1/2\right), \left(0, 1/4\right), \left(A_1, 1/8\right), \ldots,  \left(0,1/2^{2n}\right), \left(A_n, 1/2^{2n+1}\right), \left(0, 1/2^{2n+2}\right), \ldots,
\]
in that order. The ``right-hand'' points, which are of the form $\left(A_n, 1/2^{2n+1}\right)$, correspond to odd integer values of $t$. The ``left-hand'' points, which are of the form $\left(0, 1/2^{2n}\right)$, correspond to even integer values of $t$. If $y \in (0, 1]$, then there exists a unique $x \geq 0$ such that $(x, y) \in \Gamma$. We denote this unique value $x$ by $\phi(y)$.

Let $U$ be the unbounded domain in $\R^2$ given  by
\[
U = \{ (x, y) : y \in (0, 1) \text{ and } |x - \phi(y)| < y \}.
\]
The set $U$ is enclosed by three curves: the horizontal line segment $\{(x,1) : -1\leq x\leq 1 \}$, an unbounded curve $\Gamma_l$ to the left of $\Gamma$, and an unbounded curve $\Gamma_r$ to the right of $\Gamma$. The two curves $\Gamma_l$ and $\Gamma_r$ both zigzag in the same fashion as $\Gamma$. We parameterise them in the same way as $\Gamma$, in the sense that, if $t\geq 0$, then
\[
\pi_2(\Gamma(t)) = \pi_2(\Gamma_r(t)) =\pi_2(\Gamma_l(t)),
\]
and
\[
\pi_1(\Gamma(t)) = \pi_1(\Gamma_r(t)) -  \pi_2(\Gamma(t)) = \pi_1(\Gamma_l(t)) + \pi_2(\Gamma(t)).
\]
The set $U$ is shown in Figure~\ref{haf}. It is unbounded, and infinity is not accessible from $U$.

\begin{figure}[ht]
\centering
\includegraphics{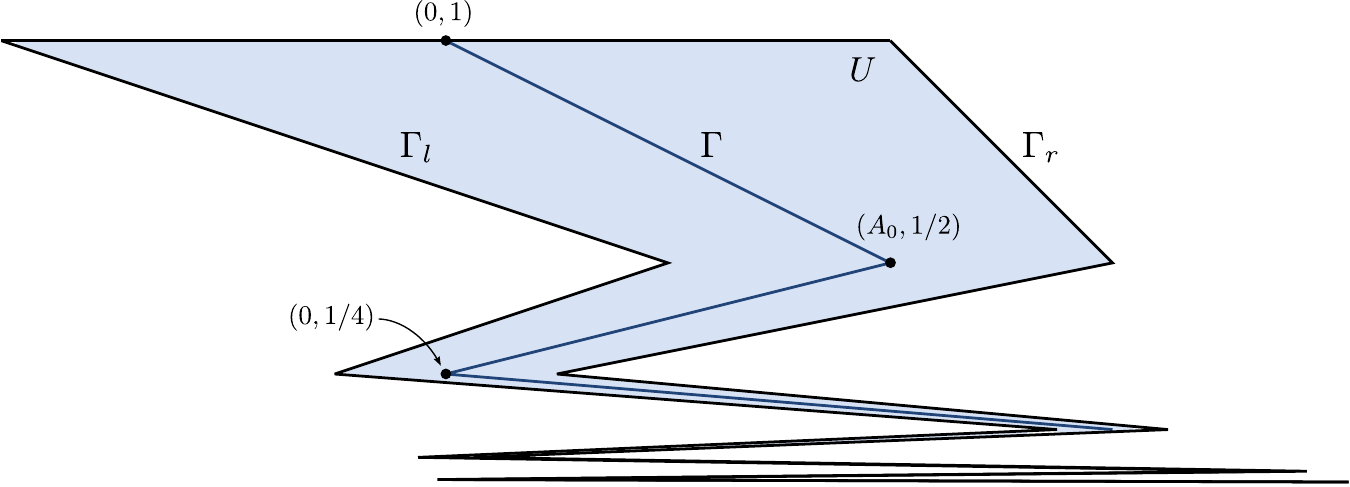}
\caption{An unbounded open set that is an $I_2$-set}
\label{haf}
\end{figure}

We must now define a continuous function $f : \R^2 \to \R^2$ with $I(f) = U$. To begin with, we define $f$ to be the identity function on the set
\[
V = \{ (x, y) : y\notin (0,1) \}.
\]

Next we define $f$ on $U$. To do this, we first define a function $\psi$ on $\Gamma \cup \Gamma_l \cup \Gamma_r$ that satisfies $\psi(\Gamma) = \Gamma$, $\psi(\Gamma_l) = \Gamma_l$ and $\psi(\Gamma_r) = \Gamma_r$. Roughly speaking, the function $\psi$ will be chosen so that it maps each ``zigzag'' of $\Gamma$ (and $\Gamma_l$ and $\Gamma_r$) downwards to the zigzag below. We will then define  $f$ on $U$ by interpolating between $\psi$ on $\Gamma$ and $\psi$ on $\Gamma_l \cup \Gamma_r$. 

 Let $\beta : [0,+\infty)\to[0,+\infty)$ be the continuous function given by
\[
\beta(t) =
\begin{cases}
n + 1, &\qfor t \in [n - 1/n, n],\\
(1+1/n)t, &\qfor t \in [n, n+1-1/(n+1)],
\end{cases}
\]
for $n\in\N$.  One can check that $\beta$ satisfies the inequality
\begin{equation}\label{iab}
|\beta(t)-(t+1)|\leq \frac{1}{n}, \quad\text{when}\quad t\in[n,n+1],\quad n\in\No.
\end{equation}
We also record two other important properties of $\beta$, namely that (a) $\beta(n) = n+1$, for $n\isnatural$, and (b) if $t>0$, then $\beta^k(t) \isnatural$, for all sufficiently large values of $k$.

Let
\[
h_1(t) =
\begin{cases}
4t, &\qfor t \in [0, 1/2], \\
2t+1, & \qfor t \in [1/2,+\infty),
\end{cases}
\]
and define
\[
\psi(\Gamma(t)) = \Gamma(h_1\circ \beta\circ h_1^{-1}(t)), \quad\text{for}\quad t\in[0,+\infty).
\]

The function $\psi$ is continuous on $\Gamma$, and it maps $\Gamma$ into itself. Using property (a) from above, we can see that $\psi$ maps each \emph{right-hand} vertex $\left(A_n, 1/2^{2n+1}\right)$ of $\Gamma$ to the next right-hand vertex $\left(A_{n+1}, 1/2^{2n+3}\right)$, for $n\in\N$. Next, by property (b), if $\xi \in \Gamma$, then $\psi^k(\xi)$ is one of the right-hand vertices of $\Gamma$, for all sufficiently large  $k$. We deduce that $\psi^k(\xi) \rightarrow \infty$ as $k\rightarrow\infty$, for $\xi \in \Gamma$. Finally, by combining inequalities \eqref{kdl} and \eqref{iab}, it can be shown that $|\psi(\Gamma(t))-\Gamma(t+2)|\to 0$ as $t\to\infty$, and hence that $|\psi(\Gamma(t))-\Gamma(t)|\to 0$ as $t\to\infty$; we omit the detailed calculations. 

The definition of $\psi$ on $\Gamma_l \cup \Gamma_r$ is similar to that on $\Gamma$. We define $\psi$ on $\Gamma_l$ and describe its properties; the definition and properties on $\Gamma_r$ are much the same but with $\Gamma_r$ replacing $\Gamma_l$. Let $h_2(t)=2t$, and define
\[
\psi(\Gamma_l(t)) = \Gamma_l(h_2\circ\beta\circ h_2^{-1}(t)\big), \quad\text{for}\quad t\in[0,+\infty).
\]

Then $\psi$ is continuous on $\Gamma_l$, and it maps $\Gamma_l$ into itself. It maps each \emph{left-hand} vertex of $\Gamma_l$ to the left-hand vertex below, and if $\xi \in \Gamma_l$, then $\psi^k(\xi)$ is one of the left-hand vertices of  $\Gamma_l$, for all sufficiently large $k$. Hence $\psi^k(\xi) \rightarrow (0, 0)$ as $k\rightarrow\infty$, for $\xi \in \Gamma_l$. Finally, we observe that $|\psi(\Gamma_l(t))-\Gamma_l(t)|\to 0$ as $t\to\infty$.

Next we define $f$ on $U$ in such a way that $f(x,y)=\psi(x,y)$ when $(x,y)\in \Gamma\cup\Gamma_l\cup\Gamma_r$ and $y<1/2$. To do this, it  is slightly easier to use an alternative coordinate system. Suppose that $(x,y)$ is a point with $0<y\leq 1$. Then, by the definitions of $\Gamma$ and $\phi$, there exists a unique real number $\eta$ such that $x = \phi(y) + \eta y$. We write $(x, y) = \floor*{\eta, y}$, and say that $\eta$ and $y$ are the \emph{$U$-coordinates} of the point $(x,y)$. We also say that $\eta$ is the \emph{$\eta$-coordinate} of the point $(x,y)$. Note that $(x,y) \in \Gamma$ if and only if $(x, y) = (\phi(y), y) = \floor*{0,y}$. Note also that $(x,y) \in \Gamma_l$ if and only if $(x, y) = \floor*{-1,y}$, and $(x,y) \in \Gamma_r$ if and only if $(x, y) = \floor*{1,y}$.

Let $\alpha : [0,1] \to [0, 1]$ be the continuous function given by
\[
\alpha(t) =
\begin{cases}
0, &\qfor t \in [0, 1/2], \\
2t-1, & \qfor t \in [1/2,1].
\end{cases}
\]

We recall the definition $\pi_2(x,y)=y$. Now, choose any point $(x,y)$ in $U \cup \Gamma_l \cup \Gamma_r$. We write $(x,y)$ in the form $\floor*{\varepsilon\eta, y}$, where $\eta\in[0,1]$ and $\varepsilon$ is either $-1$ or $1$. Let
\[
f(\floor*{\varepsilon\eta, y})= \floor*{\varepsilon\eta', y'},
\]
where
\[
\eta'=(1-\alpha(y))\alpha(\eta)+\alpha(y)\eta,
\]
and
\[
y'=(1-\alpha(y))\big((1-\eta)\pi_2(\psi(\floor*{0, y}))+\eta\pi_2(\psi(\floor*{\varepsilon, y}))\big)+\alpha(y)y.
\]
This is a well-defined function, because $\eta'\in[0,1]$ and $y'\in (0,1]$. Clearly it is continuous on $U\cup\Gamma_l\cup\Gamma_R$. When $\eta=0$, we have $\eta'=0$, so $f$ maps $\Gamma$ into itself, and similarly it maps $\Gamma_l$ and $\Gamma_r$ into themselves. What is more, if $\eta<1$, then $\eta'<1$, so $f$ maps $U$ into itself. 

Next we prove that $f$ is continuous on $\overline{U}\cup V$. When $y=1$, we have $y'=1$ and $\eta'=\eta$, so $f$ coincides with the identity function. Observe that if $y<1/2$, then the definition of $f$ simplifies to 
\[
f(\floor*{\varepsilon\eta, y})= \floor*{\varepsilon\alpha(\eta), (1-\eta)\pi_2(\psi(\floor*{0, y}))+\eta\pi_2(\psi(\floor*{\varepsilon, y}))},
\]
in which case $y'<1/2$ also. Notice in particular that $f(x,y)=\psi(x,y)$ when $(x,y)\in \Gamma\cup\Gamma_l\cup\Gamma_r$ and $y<1/2$.  Now suppose that $\big((x_n,y_n)\big)_{n\in\N}$ is a sequence of points in $U\cup\Gamma_l\cup\Gamma_r$ that converges to a point $(u,0)$. Let $\varepsilon_n\eta_n\in[-1,1]$ be the $\eta$-coordinate of $(x_n,y_n)$, where $\varepsilon_n$ is $-1$ or $1$ and $\eta_n\in[0,1]$, so that $x_n=\phi(y_n)+\varepsilon_n\eta_ny_n$. We know that $x_n\to u$ and $y_n\to 0$ as $n\to\infty$, and it follows that $\phi(y_n)\to u$ and $\phi(y_n)+\varepsilon_n y_n\to u$ as $n\to\infty$. Hence $\floor*{0, y_n}\to (u,0)$ and $\floor*{\varepsilon_n, y_n}\to (u,0)$ as $n\to\infty$. It then follows that $\psi(\floor*{0, y_n})\to (u,0)$ and $\psi(\floor*{\varepsilon_n, y_n})\to (u,0)$ as $n\to\infty$. Furthermore, if we define
\[
(x_n',y_n') = (1-\eta_n)\psi(\floor*{0, y_n})+\eta_n\psi(\floor*{\varepsilon_n, y_n}),
\]
then $(x_n',y_n')\to (u,0)$ as $n\to\infty$, also. Let $\eta_n'\in[-1,1]$ be the $\eta$-coordinate of $(x_n',y_n')$, so that $x_n'=\phi(y_n')+\eta_n'y_n'$. Then we see that $\phi(y_n')\to u$ as $n\to\infty$. Hence
\[
f(x_n,y_n) = (\phi(y_n')+\varepsilon_n\alpha(\eta_n)y_n',y_n') \to (u,0)\quad\text{as}\quad n\to\infty.
\]
This completes our argument to show that $f$ is continuous on $\overline{U}\cup V$.

Let us now prove that $U\subset I(f)$ and $(\Gamma_l\cup\Gamma_r)\cap I(f)=\emptyset$. Choose any point  $\floor*{\varepsilon\eta, y}$ in $U\cup\Gamma_l\cup\Gamma_r$, where, as usual, $\varepsilon$ is $-1$ or $1$ and $\eta\in[0,1]$, and for the moment we assume that $1/2<y<1$. If $1/2<y<3/4$, then one can check that $y'<1/2$. If $3/4\leq y<1$, then one can check that $y'<\alpha(y)$. It follows that the $y$-component of $f^k(\floor*{\varepsilon\eta, y})$ is less than  $1/2$, for a sufficiently large positive integer $k$. Since $f$ coincides with $\psi$ on $\Gamma\cup\Gamma_l\cup\Gamma_r$ when $y<1/2$, we see that $(\Gamma_l\cup\Gamma_r)\cap I(f)=\emptyset$ and $\Gamma\subset I(f)$. Now, if  $\floor*{\varepsilon\eta, y}\in U$ and $y<1/2$, then $\eta'=\alpha(\eta)$. It follows that $f^k(\floor*{\varepsilon\eta, y})\in \Gamma$, for a sufficiently large positive integer $k$.  Hence $U\subset I(f)$.

It remains to define $f$ in the set
\[
W = \{(x, y) \notin \overline{U} : 0<y<1\}.
\]
Points $(x,y)$ in this set can also be written in $U$-coordinates, in the form $(x,y)=\floor*{\varepsilon\eta,y}$, where $\eta>1$ and $\varepsilon$ is $-1$ or $1$. We define
\[
f(\floor*{\varepsilon\eta,y}) = f(\floor*{\varepsilon,y}) + \floor*{\eta - \varepsilon, 0}.
\]
 We leave it for the reader to check that $f$ is continuous on $\R^2$, and that $I(f) = U$.
\end{example}

We finish, as promised, with an example of an unbounded open set that is not an $I_2$-set.

\begin{example}\label{example:failsnake}
Let $E = \{ (x, y) : y < 0, (x-5)^2 + y^2 < 1 \}$ and define $S = U \cup E$, where $U$ is the domain from Example~\ref{example:snake}. A stylised image of $S$ is shown in Figure~\ref{hxf}.

\begin{figure}[ht]
\centering
\includegraphics{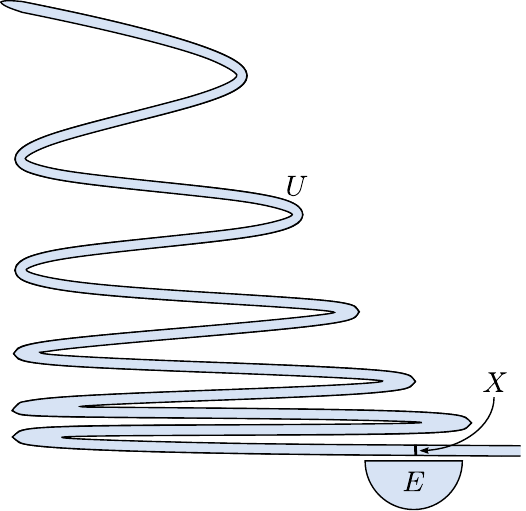}
\caption{An unbounded open set that is not an $I_2$-set}
\label{hxf}
\end{figure}

Let us suppose, in order to reach a contradiction, that $S=I(f)$ for some continuous function $f:\R^2\to\R^2$. As $f(S) \subset S$, we must have either $f(E)\subset E$ or $f(E)\subset U$. In fact, the former case cannot arise, because $E$ is bounded and lies in the escaping set. The closure of $E$ is compact, and $E$ is connected, so $f(E)$ is a bounded, connected subset of $U$. It follows that there is a positive constant $k$ such that
\[
f(E) \subset \{(x,y)\in U\,:\, y \geq k\}.
\]
Choose $\varepsilon>0$ such that if $\text{dist}((x,y),E)\leq \varepsilon$, then the $y$-component of $f(x,y)$ is greater than $k/2$. Let us assume also that $\varepsilon<k/2$.

Now choose a vertical cross section $X$ of $U$, in the obvious way. The set $X$ is simply a small vertical line segment, contained wholly within $U$, which splits $U$ into two components: a bounded component $L$ and an unbounded component $R$. We can choose $X$ so that each point of $X$ lies within a distance $\varepsilon$ of $E$.

Let $\gamma:[0,1]\to U$ be a continuous path such that $\gamma([0,1)) \subset L$, such that $\gamma(1)\in X$, and finally such that $f^n(\gamma(0))\in X\cup R$, for $n\isnatural$; it is easy to see that such a path exists. Let $|\gamma|$ denote the image of $\gamma$ in $U$. We call sets $|\gamma|$ that arise in this way \emph{restricted paths}.

Suppose that $\gamma$ is a restricted path, and $p=\gamma(0)$ and $q=\gamma(1)$. We know that the $y$-component of $f(q)$ is greater than or equal to $k/2$, so $f(q)\in L$. Let $N$ be the largest positive integer such that $f^N(q)\in L$. Notice that $f^n(p)\in X\cup R$, for $n\in\N$. Consider the path $\delta(t)=(f^N\circ\gamma)(1-t)$. It satisfies $\delta(0)\in L$ and $\delta(1)\in X\cup R$. Let $s\in (0,1]$ be the smallest positive value such that $\delta(s)\in X$. Finally, let
\[
\phi:[0,1]\to U,\quad \phi(t) = \delta(st).
\]
It follows by the definition of the integer $N$ that $|\phi|$ is a restricted path, and that 
\[
|\phi| \subset |\delta|=f^N(|\gamma|).
\]
We have shown that \emph{given any restricted path $|\gamma|$, there is another restricted path $|\phi|$ and a positive integer $N$ such that $|\phi|\subset f^N(|\gamma|)$.}

Using this observation, we can construct a sequence of restricted paths $(\alpha_i)_{i\isnatural}$ and a sequence of positive integers $(n_i)_{i\isnatural}$ such that
\[
|\alpha_{i+1}|\subset f^{n_i}(|\alpha_{i}|), \quad\qfor i\isnatural.
\]
As the sets $|\alpha_i|$ are compact, it follows from Lemma~\ref{lemm:sequence} that there is a point $x$ in $|\alpha_1|$ such that the sequence of iterates $(f^i(x))_{i\isnatural}$ has a bounded subsequence. This is a contradiction, because $x \in I(f)$. Thus, contrary to our earlier assumption, $S$ is not an $I_2$-set.
\end{example}

\itshape{Acknowledgement:} \normalfont We are grateful to John Osborne for a number of helpful comments.
%
%
%
%
%
%
\bibliographystyle{acm}

\end{document}